\numberwithin{equation}{section}
\newtheorem{theorem}{Theorem}[section]
\newtheorem{lemma}[theorem]{Lemma}
\newtheorem{corollary}[theorem]{Corollary}
\newtheorem{hypothesis}[theorem]{Hypothesis}
\theoremstyle{definition}
\theoremstyle{remark}
\newcommand{\CC}{\mathbf{C}}
\newcommand{\QQ}{\mathbb{Q}}
\newcommand{\ZZ}{\mathbf{Z}}
\newcommand{\OO}{\mathcal{O}}
\newcommand{\la}{\langle}
\newcommand{\ra}{\rangle}
\newcommand{\q}{\mathrm{q}}
\newcommand{\cc}{\mathrm{c}}
\newcommand{\eu}{\mathrm{eu}}
\newcommand{\triv}{\mathbf{1}}
\begin{document}

\title[The support of the spherical representation]{$W$-exponentials, Schur elements, and the support of the spherical representation of the rational Cherednik algebra}

\author{Stephen Griffeth}
\address{Stephen Griffeth \\ Instituto de Matem\'atica y F\'isica \\ Universidad de Talca \\ Campus Norte, Camino Lircay S/N \\ Talca, Chile
}
\email{sgriffeth@inst-mat.utalca.cl}

\author{Daniel Juteau}
\address{Daniel Juteau \\ CNRS, 
Institut de Math\'ematiques de Jussieu - Paris Rive Gauche \\
UMR7586, 
Univ. Paris Diderot, Sorbonne Paris Cit\'e,
Sorbonne Universit\'es \\
UPMC Univ. Paris 06, 
F-75013 Paris, France }
\email{daniel.juteau@imj-prg.fr}

\begin{abstract}
Given a complex reflection group $W$ we compute the support of the spherical irreducible module $L_c(\triv)$ of the rational Cherednik algebra $H_c(W)$ in terms of the simultaneous eigenfunction of the Dunkl operators and Schur elements for finite Hecke algebras.
\end{abstract}

\thanks{We thank Jean Michel for his work developing CHEVIE and his expert advice, which was indispensable for the explicit calculations at the end of the paper. We thank Maria Chlouveraki for her work implementing her calculations of Schur elements, and Thomas Gerber and Emily Norton for interesting discussions related to their recent preprint. We are especially grateful to Ivan Losev for explanations around biadjointness. We acknowledge the financial support of ANR grants VARGEN (ANR-13-BS01-0001-01) and GeRepMod (ANR-16-CE40-0010-01), Fondecyt Proyecto Regular 1151275 and MathAmSud grant RepHomol, which funded a visit by the second author to Talca in December 2016. Parts of this paper were written at the ICTP in Trieste, which we thank for an excellent working environment.}

\maketitle

\section{Introduction} 

In this paper we compute the support of the (unique) simple quotient $L_c(\triv)$ of the polynomial representation $\CC[V]=\Delta_c(\triv)$ of the rational Cherednik algebra of a complex reflection group $W$, and in particular determine when it is finite dimensional, in terms of the joint eigenfunction for the Dunkl operators (the ``$W$-exponential function'') and Schur elements for finite Hecke algebras, which arise in this context because they compute the endomorphism of the identity functor produced by an induction-restriction biadjunction. 

We refer to the body of the paper for detailed definitions. Given a complex reflection group $W$ acting in a vector space $V$, one obtains a stratification of $V$ whose strata are the equivalence classes for the equivalence relation $p \equiv q$ if the stabilizer groups are equal, $W_p=W_q$. Given a stratum $S$ we will write $W_S=W_p$ for any $p \in S$. The support of the irreducible head of the polynomial representation is the closure of a single $W$-orbit of strata. The parameter $c$ runs over a space $\mathcal{C}$ of parameters with coordinate functions that we label $c_{H,\chi}$, indexed by $W$-orbits of pairs consisting of a reflecting hyperplane $H$ for $W$ and a non-trivial linear character $\chi$ of the cyclic reflection subgroup $W_H$. A \emph{positive hyperplane} in $\mathcal{C}$ is a hyperplane $C \subseteq \mathcal{C}$ of the form
$$C=\left\{\sum a_{H,\chi} c_{H,\chi}=a \right\} \quad \hbox{with real $a_{H,\chi} \geq 0$ and $a >0$.}$$ The $W$-exponential function $e(x,\lambda,c)$ is a joint eigenfunction for the Dunkl operators with eigenvalue $\lambda$, normalized by $e(0,\lambda,c)=1$. For $\lambda$ fixed, there is an entire holomorphic renormalization function $F_S(c)$ with zero set depending only on the stratum $S$ of $\lambda$ with the properties that $F_S(c) e(x,\lambda,c)$ is entire as a function of $c$ and, for $c$ fixed, non-zero as a function of $x$. The zero set of $F_S(c)$ is a certain set of positive hyperplanes. We have: 

\begin{theorem} \label{ren support theorem}
A stratum $S$ is in the support of $L_c(\triv)$ if and only if $F_S(c) \neq 0$.
\end{theorem}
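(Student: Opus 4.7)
The plan is to reduce the theorem to two separate statements: (i) the Bezrukavnikov--Etingof support criterion, asserting that $S \subseteq \operatorname{supp} L_c(\triv)$ if and only if the restriction $\operatorname{Res}^W_{W_S} L_c(\triv)$ is non-zero in $\OO_c(W_S)$, and (ii) an identification of $F_S(c)$ with a Schur element scalar that controls this non-vanishing via the induction-restriction biadjunction, computed using the $W$-exponential.

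For (i), I would appeal to the theory of Bezrukavnikov--Etingof restriction functors: their local isomorphism of completed Cherednik algebras along the $W$-orbit of a point $p \in S$ identifies the condition ``$p$ is in the support'' with the non-vanishing of the restricted module. To establish (ii), I would invoke Losev's biadjunction $\operatorname{Ind} \dashv \operatorname{Res} \dashv \operatorname{Ind}$: the composition
$$L_c(\triv_W) \longrightarrow \operatorname{Ind}^W_{W_S}\operatorname{Res}^W_{W_S} L_c(\triv_W) \longrightarrow L_c(\triv_W)$$
acts on the (one-dimensional) trivial isotypic component of the head by multiplication by a scalar $s_S(c)$, which vanishes exactly when $\operatorname{Res}^W_{W_S} L_c(\triv_W) = 0$. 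By a standard computation of biadjunction units via traces in the Hecke algebra, $s_S(c)$ is (up to a unit) a product of Schur elements for the finite Hecke algebra attached to $W_S$, evaluated at parameters determined by $c$.

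The main body of the argument is then the identification $F_S(c) \doteq s_S(c)$, up to a nowhere-vanishing factor. Here the $W$-exponential enters directly: $e(x,\lambda,c)$ is the Dunkl--Opdam intertwiner applied to the classical exponential $e^{\la x, \lambda \ra}$, so its poles in $c$ arise precisely at those hyperplanes where the intertwiner degenerates on the joint $\lambda$-eigenspace. A pole-by-pole analysis along positive hyperplanes, together with the normalization $e(0,\lambda,c) = 1$, should match this locus (with multiplicities) to the zero set of $s_S(c)$; by exhibiting the expansion of $e(x,\lambda,c)$ in terms of joint eigenfunctions for $W_S$ with rational coefficients in $c$, one reads off $F_S(c)$ as precisely the product of Schur element factors appearing in the denominators.

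The main obstacle will be this last matching of analytic and algebraic invariants. The biadjunction produces $s_S(c)$ as an algebraic product of Schur elements coming from the Hecke algebra side, whereas the renormalization $F_S$ is defined through the analytic behavior of a transcendental eigenfunction; tying the two together should require either a sufficiently explicit formula for the Dunkl intertwiner near each positive hyperplane, or an indirect uniqueness argument characterizing both $F_S$ and $s_S$ as the minimal holomorphic denominator making $e(x,\lambda,c)$ regular and non-zero in $x$ at fixed $c$.
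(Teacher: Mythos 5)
Your step (i) is fine and is what the paper uses (via Bezrukavnikov--Etingof and Losev, $S\subseteq\mathrm{supp}\,L_c(\triv)$ iff $\mathrm{Res}_p L_c(\triv)\neq 0$). The gap is in step (ii) and in the proposed identification $F_S\doteq s_S$. First, the scalar by which the unit--counit composition acts on $L_c(\triv)$ does \emph{not} vanish exactly when $\mathrm{Res}_p L_c(\triv)=0$: non-vanishing of that scalar is a Higman-type relative projectivity condition, strictly stronger than non-vanishing of the restriction (compare the trivial module of a finite group in characteristic $p$ dividing $|G:H|$: its restriction to $H$ is non-zero, yet the relative trace of the identity vanishes). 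The paper obtains a Schur-element criterion for the support only when $\Delta_c(\triv)$ is projective and $\mathrm{KZ}(P)$ is one-dimensional (Corollary \ref{rank one}, Lemma \ref{positive cone}), i.e.\ in the positive cone; the criterion valid for all $c$ is surjectivity of the counit $\mathrm{Ind}_p\mathrm{Res}_p\Delta\to\Delta$ alone (Lemma \ref{support lemma}), which is not computed by a Schur element in general.

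Second, and fatally for your plan, $F_S(c)$ is \emph{not} equal to $s_S(c)$ up to a nowhere-vanishing factor. Already for $W=\ZZ/2$ the paper computes $F_\lambda(c)=\Gamma(1-c)/\Gamma(1-2c)$, whose zero set is the set of positive half-odd integers, whereas the principal Schur element $1+q^{-1}$ with $q=e^{2\pi i c}$ vanishes at \emph{all} half-odd integers; at $c=-1/2$ the Schur element vanishes, yet $L_c(\triv)=\Delta_c(\triv)$ has full support and $F_\lambda(-1/2)\neq 0$. This asymmetry is precisely why Theorem \ref{support theorem2} is phrased in terms of positive hyperplanes on whose intersection with the positive cone $s$ vanishes, rather than in terms of the full zero locus of $s$, and why the logical order in the paper is the reverse of yours: Theorem \ref{ren support theorem} is proved first, with no Hecke-algebra input, by identifying $\mathrm{res}_\lambda(\Delta_c(\triv))$ with the generalized eigenspace $\widehat{\CC[V]}_\lambda$, showing it is generated by $q_\lambda=F_\lambda(c)\,e_\lambda$, using Losev's formula to see that the constant term of every element of this eigenspace is a multiple of $F_\lambda(c)$, and applying Nakayama to conclude that the counit is surjective iff $F_\lambda(c)\neq 0$. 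The Schur elements enter only afterwards, and only through their restriction to the positive cone. To repair your argument you would need to replace the matching $F_S\doteq s_S$ by an analysis of this kind.
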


This theorem reduces the problem of calculating the support of $L_c(\triv)$ to that of calculating the zeros of $F_S(c)$ for all strata $S$. In order to do this we use the finite Hecke algebra $\mathcal{H}$ of $W$, and given a stratum $S$, the finite Hecke algebra $\mathcal{H}_S$ of the parabolic subgroup $W_S$ of $W$. Combining recent work of Bezrukavnikov-Etingof, Losev, Marin-Pfeiffer, and Shan shows that induction and restriction for the pair $\mathcal{H}$ and $\mathcal{H}_S$ are biadjoint, and the biadjunction produces a relative principal Schur element which we will write as  $s(q_c)=|W:W_S|_{q_c}$, whose zeros detect precisely when the trivial representation is not relatively $\mathcal{H}_S$-projective.

Our second main theorem is then an analog of the fact that the trivial representation of a finite group in characteristic $p$ is projective relative to a subgroup if and only if $p$ does not divide the index.

\begin{theorem} \label{support theorem1}
A stratum $S$ is contained in the support of $L_c(\triv)$ if and only if $c$ is not contained in any positive hyperplane $C$ such that $s=|W:W_S|_{q_c}$ vanishes on its intersection with the positive cone. 
\end{theorem}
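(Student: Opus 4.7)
The plan is to use Theorem~\ref{ren support theorem} to reduce the question to computing the zero locus of $F_S(c)$, and then to identify this locus with the prescribed union of positive hyperplanes via the biadjunction. The bridge is the induction-restriction biadjunction for the pair $(\mathcal{H}, \mathcal{H}_S)$ assembled from the work of Bezrukavnikov-Etingof, Losev, Marin-Pfeiffer, and Shan: the unit-counit composition acts on $\triv$ by the scalar $s(q_c) = |W:W_S|_{q_c}$, and transported via KZ to Cherednik category $\OO$, the same scalar controls whether the restriction of $L_c(\triv)$ along $S$ is nonzero.

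First I would set up the dictionary $F_S(c) \neq 0 \iff s(q_c) \neq 0$. On the Cherednik side, $S$ lies in the support of $L_c(\triv)$ precisely when the restriction functor along $S$ does not annihilate $L_c(\triv)$. By the biadjunction this is equivalent to relative $\mathcal{H}_S$-projectivity of $\triv$ in $\mathcal{H}$-modules, and hence to $s(q_c) \neq 0$, as noted in the discussion preceding the theorem statement. Combining this with Theorem~\ref{ren support theorem} yields an equivalence $F_S(c) \neq 0 \iff s(q_c) \neq 0$ at each $c$, so that these two objects share a vanishing set on the parameter space.

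Second, I would translate this matching into the geometric language of the statement. The Schur element $s(q_c)$ is, after substitution $q_c = \exp(2\pi i c)$, a product of factors of the form $1 - \exp(2\pi i \ell(c))$ where $\ell$ is a nonnegative integer linear combination of the coordinate functions $c_{H,\chi}$; each factor vanishes on an integer family of parallel hyperplanes, and among them the members meeting the positive cone in a codimension-one subset are precisely the positive hyperplanes. Since $F_S(c)$ is entire with zero set known to be a union of positive hyperplanes, the vanishing-set identification forces this zero set to consist of exactly those positive hyperplanes $C$ on which some cyclotomic factor of $s(q_c)$ vanishes identically, equivalently those positive hyperplanes $C$ on which $s(q_c)$ vanishes on $C \cap \text{positive cone}$.

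The main technical obstacle is making the biadjunction-based identification sharp enough to compare zero loci hyperplane-by-hyperplane, rather than merely as analytic vanishing sets up to an invertible holomorphic factor. One must verify that the scalar extracted from the Cherednik-side biadjunction acts as the honest Schur element $s(q_c)$ (not a variant) and that the positivity filter operates symmetrically on both sides, using the explicit cyclotomic product formula for $s$ together with the constraint that $F_S(c)$ is entire with only positive-hyperplane zeros to eliminate potential spurious components.
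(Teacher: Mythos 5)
There is a genuine gap in your first step. You assert a pointwise equivalence ``$S \subseteq \mathrm{supp}(L_c(\triv))$ iff $s(q_c) \neq 0$'' valid at every $c$, deduced from the biadjunction, and then intersect with Theorem \ref{ren support theorem} to get $F_S(c) \neq 0 \iff s(q_c) \neq 0$. This equivalence is false in general, and the failure is precisely why the theorem is phrased in terms of positive hyperplanes rather than as ``$s(q_c)\neq 0$.'' The function $c \mapsto s(q_c)$ is periodic in $c$ (it factors through $q_c = e^{2\pi i c}$), so its zero locus contains infinitely many parallel translates of each hyperplane, most of which are not positive; at such non-positive parameters (e.g.\ $c$ very negative, where $L_c(\triv)=\Delta_c(\triv)=\CC[V]$ has full support) the stratum $S$ is in the support even though $s(q_c)=0$. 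The mechanism that links support to the principal Schur element is Corollary \ref{rank one}, which requires $\mathrm{KZ}(P)$ to be one-dimensional — that is, it requires the projective cover $P_c(\triv)$ to coincide with $\Delta_c(\triv)$. This holds only when $\Delta_c(\triv)$ is projective, which the paper guarantees on the positive cone $c_{H,\chi}\geq 0$ (Lemma \ref{positive cone}); outside the cone $\mathrm{KZ}(P_c(\triv))$ need not be one-dimensional and the relative-projectivity criterion you invoke is not available, so your dictionary is not merely unproved but wrong where it is needed.

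The paper's actual argument avoids this by never comparing $F_S$ and $s$ pointwise off the cone: Theorem \ref{sol theorem} shows the non-support locus is a union of positive hyperplanes; Lemma \ref{positive cone} identifies, \emph{only on the positive cone}, the non-support points with the zeros of $s$; and the observation that a positive hyperplane is determined by its (Zariski-dense) intersection with the positive cone transports the criterion from the cone to each whole hyperplane. Your second paragraph gestures at this hyperplane-by-hyperplane matching, but it rests on the false first step and additionally leans on the explicit cyclotomic product formula for $s$, which the paper deliberately does not use in this proof since it is only known under the symmetrizing-trace hypothesis (Hypothesis \ref{hyp}) and is not needed: positivity of the relevant hyperplanes comes from the denominator bound of Lemma \ref{derham lemma}, not from the shape of $s$. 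To repair your proof you should replace the global equivalence by the cone-restricted one and make the Zariski-density argument carry the conclusion off the cone.
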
 

Our proof of Theorem \ref{support theorem1} combines Theorem \ref{ren support theorem} with an analysis of the support of $L_c(\triv)$ in the positive cone $c_{H,\chi} \geq 0$, where $\Delta_c(\triv)$ is projective. Here the Schur elements play the decisive role. Of course, one expects that the biadjunction may be chosen so that $s$ is a Laurent polynomial in the parameters, so that in practice the Schur element is zero on the whole hyperplane $C$. This is known to hold at least for the infinite family $G(r,p,n)$ and all real reflection groups, for which $s$ may be calculated using a symmetrizing form. Moreover, it is conjectured that for all complex reflection groups a symmetrizing form with properties allowing explicit calculation exist; we have used the computer algebra package CHEVIE to tabulate results in all cases assuming these conjectures.

We mention some previously known special cases of this result: first, taking $S$ to be the biggest stratum, with $W_S=1$, and assuming $W=G(r,1,n)$, we recover the description of the set of singular values obtained by Dunkl-Opdam in subsection 3.4 of \cite{DuOp}, and extend this and the previously known result for real reflection groups \cite{DJO} in a uniform fashion to all complex reflection groups; secondly, taking $W$ to be a real reflection group, we recover Etingof's calculation of the support of $L_c(\triv)$ for finite Coxeter groups, and in particular determine when it is finite dimensional, a result first obtained by Varagnolo-Vasserot \cite{VaVa} in the case of a Weyl group $W$ and equal parameters $c$.

Losev \cite{Los2} and Gerber \cite{Ger} have given combinatorial algorithms computing, for a given parameter $c$, the set of finite dimensional irreducible representations of the rational Cherednik algebra for groups in the infinite family $G(r,1,n)$, but it seems to be a difficult combinatorial problem (recently solved in some cases including the spherical case at ``integral parameters" by Gerber-Norton \cite{GeNo}) to start with a given lowest weight $E$ and use these algorithms to compute the set of $c$ for which $L_c(E)$ is finite dimensional. In particular, for those familiar with the algorithms of Losev and Gerber it might be a surprise to learn that the set of $c$ for which $L_c(\triv)$ is finite dimensional has no components of codimension greater than two for any of the groups $G(r,1,n)$ (see \ref{gr1n} for this).

\section{Notation}

\subsection{Reflection groups} Let $V$ be a finite dimensional $\CC$-vector space and let $W \subseteq \mathrm{GL}(V)$ be a \emph{complex reflection group}: a finite group of linear transformations of $V$ that is generated by the set
$$R=\{r \in W \ | \ \mathrm{codim}_V(\mathrm{fix}(r))=1 \}$$ of \emph{reflections} it contains. We write $$\mathcal{A}=\{ \mathrm{fix}(r) \ | \ r \in R \}$$ for the set of \emph{reflecting hyperplanes} for $W$, and given $H \in \mathcal{A}$ we fix a linear form $\alpha_H \in V^*$ with
$$\{ v \in V \ | \ \la \alpha_H,v \ra=0\}=H.$$ For $r \in R$ a reflection we will write $\alpha_r=\alpha_{\mathrm{fix}(r)}$.

\subsection{Dunkl operators} For each $r \in R$ let $c_r \in \CC$ be a number such that $c_r=c_{w r w^{-1}}$ for all $r \in R$ and $w \in W$. Given $y \in V$ we write $\partial_y$ for the derivative in the direction $y$ and define a \emph{Dunkl operator} by the formula
$$y(f)=\partial_y(f)-\sum_{r \in R} c_r \la \alpha_r,y \ra \frac{f-r(f)}{\alpha_r} \quad \hbox{for $f \in \CC[V]$.}$$ These operators commute with one another. We will write $\mathcal{C}$ for the parameter space, consisting of all conjugacy-invariant functions $c:r \mapsto c_r$ with values in $\CC$.

\subsection{The Euler field and the $c$-function} Fix dual bases $y_1,\dots,y_n$ and $x_1,\dots,x_n$ of $V$ and $V^*$. The \emph{Euler field} is the vector field on $V$ defined by
$$\eu=\sum_{i=1}^n x_i \partial_{y_i}.$$ In terms of the Dunkl operators it may be expressed as
$$\eu=\sum_{i=1}^n x_i y_i + \sum_{r \in R} c_r(1-r).$$ Given an irreducible $\CC W$-module $E$ we write $c_E$ for the scalar by which $\sum_{r \in R} c_r(1-r)$ acts on $E$.

\subsection{The rational Cherednik algebra} The \emph{rational Cherednik algebra} is the subalgebra $H_c=H_c(W,V)$ of $\mathrm{End}_\CC(\CC[V])$ generated by the group algebra $\CC W$, the ring $\CC[V]$ acting on itself by multiplication, and the Dunkl operators $y \in V$. The representation $\CC[V]$ of $H_c$ has a unique simple quotient that we denote by $L_c(\triv)=\CC[V]/I$ for some ideal $I$, and our aim here is to determine the zero set $V(I)=\mathrm{supp}(L_c(\triv))$.

\subsection{Category $\OO_c$} Let $\OO_c=\OO_c(W,V)$ be the category of finitely-generated $H_c$-modules on which each Dunkl operator $y$ acts locally nilpotently. This is a highest weight category with standard objects 
$$\Delta_c(E)=\mathrm{Ind}_{\CC[V^*] \rtimes W}^{H_c} (E)$$ indexed by isoclasses of irreducible representations $E$ of $\CC W$, with respect to the ordering given by $E < F$ if $c_E - c_F \in \ZZ_{>0}$. We will write $P_c(E)$ and $L_c(E)$ for the projective cover and top, respectively, of $\Delta_c(E)$. There is a formal analytic  version of $\OO_c$: the Cherednik algebra $H_c$ may be completed at $0 \in V$ to obtain an algebra $\widehat{H_c}$, generated by power series $\widehat{\CC[V]}$, Dunkl operators, and $W$, and the category $\widehat{\OO_c}$ is the category of $\widehat{H_c}$-modules that are finitely generated over $\widehat{\CC[V]}$. Completion at zero $M \mapsto \widehat{M}$ defines an equivalence $\OO_c \cong \widehat{\OO_c}$.

The definition of $\OO_c$ may be generalized as follows: given $\overline{\lambda} \in V^*/ W$, we define $\OO_{c,\overline{\lambda}}$ as the category of $H_c$-modules $M$ that are finitely generated over $\CC[V]$ and such that for each $h \in \CC[V^*]^W$ the operator $h-h(\lambda)$ is locally nilpotent on $M$.

\subsection{Reparametrization} Given $H \in \mathcal{A}$ we write $$W_H=\{w \in W \ | \ w(v)=v \quad \hbox{for all $v \in H$}\}$$ for the pointwise stabilizer of $H$ in $W$ and we let $n_H=|W_H|$ be its cardinality. Thus $W_H$ is a cyclic reflection subgroup of $W$. Given a linear character $\chi$ of $W_H$ we write
$$e_{H,\chi}=\frac{1}{n_H} \sum_{r \in W_H} \chi(r^{-1}) r$$ for the corresponding idempotent. For $r \in W_H$ we have
$$r=\sum_{\chi \in W_H^\vee} \chi(r) e_{H,\chi}$$ so the formula for the Dunkl operators can be rewritten
$$
y=\partial_y-\sum_{\substack{H \in \mathcal{A} \\ r \in W_H \\ \chi \in W_H^\vee}}  \frac{\la \alpha_H,y \ra}{\alpha_H} c_r (1-\chi(r)) e_{H,\chi}=\partial_y-\sum_{H \in \mathcal{A}} \frac{\la \alpha_H,y \ra}{\alpha_H} \sum_{\chi \in W_H^\vee} c_{H,\chi} n_H e_{H,\chi}$$ where for each linear character $\chi$ of $W_H$ we define
$$c_{H,\chi}=\frac{1}{n_H} \sum_{r \in W_H} c_r(1-\chi(r)).$$ We observe that $c_{H,1}=0$ and that since $\chi(1)=1$ the sum is effectively over the non-identity elements of $W_H$. We will use the symbol $c$ to denote a parameter for the rational Cherednik algebra, whether in the coordinates $r$ or the coordinates $(H,\chi)$. Note that if $n_H=2$ then there is a unique non-identity character $\chi$ of $W_H$ and a unique non-identity element $r_H$ of $W_H$ and we have $c_{H,\chi}=c_{r_H}$. 

We have
$$\sum_{r \in R} c_r (1-r)=\sum_{H \in \mathcal{A}} \sum_{r \in W_H} c_r \sum_{\chi \in W_H^\vee} (1-\chi(r)) e_{H,\chi}=\sum_{H \in \mathcal{A}} \sum_{\chi \in W_H^\vee} c_{H,\chi} n_H e_{H,\chi},$$ and it follows that $c_E \geq 0$ for all irreducible representations $E$ of $\CC W$ provided that $c_{H,\chi} \geq 0$ for all $(H,\chi)$. In particular, the polynomial representation $\CC[V]=\Delta_c(\triv)$ is projective if $c_{H,\chi} \geq 0$ for all $(H,\chi)$.

\subsection{The braid group and the finite Hecke algebra} The \emph{braid group} $B=B_W$ of $W$ is the fundamental group $B_W=\pi_1(V^\circ/W,*)$ of the space $V^\circ/W$. It is generated by the monodromy operators $T_H$ for $H \in \mathcal{A}$. We define
$$q_{H,\chi}=e^{2 \pi i c_{H,\chi}}$$ and write $q=q_c$ for the collection of all these parameters $q_{H,\chi}$. The \emph{finite Hecke algebra} $\mathcal{H}_c$ is the quotient of the group algebra $\CC B$ by the relations
$$\prod_{\chi \in W_H^\vee} (T_H-\chi(r_H) q_{H,\chi})=0,$$ where we write $r_H \in W_H$ for the generator achieving a positive rotation around $H$ by the angle $2 \pi / n_H$. Note that since $q_{H,1}=1$ for all $H$, the one-dimensional $\CC B$-module $\CC$ (on which each $T_H$ acts trivially) factors through $\mathcal{H}_c$.

Given a parabolic subgroup $W_S$ of $W$, there is an inclusion of the corresponding finite Hecke algebra $\mathcal{H}_{S,c} \subseteq \mathcal{H}_c$ described in section 2D of \cite{BMR}, so we have well-defined induction and restriction functors. The paper \cite{GGOR} constructs a functor $$\mathrm{KZ}:\OO_c \rightarrow \mathcal{H}_c \mathrm{-mod}$$ making $\OO_c$ into a highest weight cover of  the category of $\mathcal{H}_c$-modules. In particular, $\mathrm{KZ}$ is fully faithful on projective objects. 

We will use the symbol $\q_{H,\chi}$ for a formal variable, in contrast to the numbers $q_{H,\chi}$. The Hecke algebra $\mathcal{H}$ for these generic parameters will be denoted without the subscript $c$. For the trivial stratum $S$ with $\mathcal{H}_S=P$, where $P=\ZZ[\q_{H,\chi}^{\pm 1}]$ is the parameter ring for the Hecke algebra, Etingof \cite{Eti2} has recently observed that $\mathcal{H}$ is free over $P$ as a consequence of work of Losev \cite{Los3} and Marin-Pfeiffer \cite{MaPf}. We will freely use this fact in the rest of this paper.

\section{Support of $L_c(\triv)$}

\subsection{Biadjointness and relative Schur elements} \label{biadjointness} In this section we will make crucial use of the parabolic induction and restriction functors of \cite{BeEt}, and the results of \cite{Sha} and \cite{Los1} asserting that they are biadjoint. Let $p \in V$ be a point and let $W_p=\{w \in W \ | \ w(p)=p \}$ be the parabolic subgroup of $W$ fixing $p$. Write $\OO_{c,p}$ for the category $\OO$ of the rational Cherednik algebra $H_c(W_p,U)$, where $U$ is the (unique) $W_p$-stable complement to the fixed space $V^{W_p}$, and write $\mathcal{H}_{p,c}$ for the finite Hecke algebra of $W_p$ at the parameter $c$, which is a subalgebra of $\mathcal{H}_c$. The following theorem summarizes the work of these authors that we shall need.
\begin{theorem}
\begin{enumerate}
\item[(a)] The functors $\mathrm{Res}_p$ and $\mathrm{Ind}_p$ are biadjoint, and $p \in \mathrm{supp}(L_c(E))$ if and only if $$\mathrm{Res}_p(L_c(E)) \neq 0.$$
\item[(b)] The KZ functor intertwines $\mathrm{Res}_p$ with $\mathrm{Res}^{\mathcal{H}_c}_{\mathcal{H}_{p,c}}$ and intertwines $\mathrm{Ind}_p$ with $\mathrm{Ind}^{\mathcal{H}_c}_{\mathcal{H}_{p,c}}$. 
\item[(c)] The functors $\mathrm{Res}^{\mathcal{H}_c}_{\mathcal{H}_{p,c}}$ and $\mathrm{Ind}^{\mathcal{H}_c}_{\mathcal{H}_{p,c}}$ are biadjoint.
\end{enumerate}
\end{theorem}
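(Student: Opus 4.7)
The plan is to compile the three parts from the three cited papers, since the authors themselves describe the statement as a summary of existing work; the task is to identify which result supplies each piece and to check that the compatibilities are coherent.

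For part (a), I would start from the Bezrukavnikov--Etingof construction \cite{BeEt} of $\mathrm{Res}_p$: complete $H_c(W,V)$ along the $W$-orbit of $p$, use the resulting Morita equivalence with a completion of $H_c(W_p,U)$, and then cut out inside the completed category the objects whose reduction lies in $\OO_{c,p}$. By construction this functor detects whether $p$ lies in the support, since completion along the orbit of $p$ kills precisely the modules whose support avoids that orbit, and the $W$-equivariance of $L_c(E)$ makes avoiding the orbit equivalent to avoiding $p$ itself. Biadjointness then comes from combining Shan's theorem \cite{Sha}, that $\mathrm{Ind}_p$ is right adjoint to $\mathrm{Res}_p$ (obtained by constructing an explicit unit on standards), with Losev's theorem \cite{Los1}, that $\mathrm{Ind}_p$ is also left adjoint to $\mathrm{Res}_p$; this second direction is the subtler one, obtained by transporting symmetric/Frobenius data on the finite Hecke algebras back up through KZ to category $\OO$.

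For part (b) I would invoke Shan \cite{Sha}, who verifies $\mathrm{KZ} \circ \mathrm{Res}_p \cong \mathrm{Res}^{\mathcal{H}_c}_{\mathcal{H}_{p,c}} \circ \mathrm{KZ}$ directly from the construction of $\mathrm{Res}_p$ via monodromy of KZ connections near $p$. Granted this compatibility, the intertwining with induction is formal: using the biadjointness supplied by (a) and (c), both $\mathrm{KZ} \circ \mathrm{Ind}_p$ and $\mathrm{Ind}^{\mathcal{H}_c}_{\mathcal{H}_{p,c}} \circ \mathrm{KZ}$ are adjoint to the same restriction functor once one restricts to projectives, on which KZ is fully faithful, and adjoints are unique up to unique isomorphism. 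For part (c), I would observe that $\mathcal{H}_c$ is a free $\mathcal{H}_{p,c}$-module of rank $|W:W_p|$ (invoking freeness of $\mathcal{H}$ over the parameter ring $P$ as stated in the excerpt, via \cite{Los3} and \cite{MaPf}), and that both algebras are symmetric, so the inclusion $\mathcal{H}_{p,c} \subseteq \mathcal{H}_c$ is a Frobenius extension with trivial Nakayama automorphism; induction and coinduction therefore coincide, yielding biadjointness.

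The hardest piece in my assembly is the second half of the biadjunction in (a): Shan's half is direct, but the Losev half requires threading the symmetric structure on the finite Hecke algebras (available downstairs by part (c)) back upstairs through KZ, which needs substantial control over projective covers, lowest weights in $\OO_c$, and the compatibility of the various completions with these categorical structures. The logical order also matters, since one plausibly uses (c) to obtain the hard half of (a); I would take care to arrange the argument so that the Hecke-algebra biadjunction in (c), which requires only the freeness and symmetric-form inputs, is established first and independently.
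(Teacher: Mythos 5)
Your treatment of part (c) contains the essential gap. You propose to establish the Hecke-level biadjunction independently, via the claim that $\mathcal{H}_{p,c} \subseteq \mathcal{H}_c$ is a Frobenius extension: this needs (i) $\mathcal{H}_c$ to be projective (you say free of rank $|W:W_p|$) as an $\mathcal{H}_{p,c}$-module, and (ii) compatible symmetrizing forms on both algebras. Neither input is available unconditionally for complex reflection groups: the symmetrizing trace is precisely the Brou\'e--Malle--Michel conjecture, which the paper explicitly flags as ``not known to exist in general,'' and the projectivity over the parabolic subalgebra is likewise taken as a \emph{Hypothesis} in the paper (Hypothesis \ref{hyp}) and invoked only later, for the explicit computation of Schur elements --- not for the proof of this theorem. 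The freeness result of Losev and Marin--Pfeiffer that you cite is freeness of $\mathcal{H}$ over the parameter ring $P$, which is a different and weaker statement. Your proposed logical order, proving (c) first and then feeding it into the ``hard half'' of (a), therefore makes the whole theorem conditional on these conjectures.

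The paper runs the argument in the opposite direction, and this is the point of the proof. Part (a), the biadjointness of $(\mathrm{Res}_p,\mathrm{Ind}_p)$ at the level of category $\OO$, is taken as established input from \cite{BeEt} and \cite{Los1} (Losev's proof does not pass through a symmetric structure on the Hecke algebra). Then, since $\mathrm{KZ}$ is a Serre quotient functor and parabolic induction and restriction are exact and preserve the kernel of $\mathrm{KZ}$, they descend to the quotient categories together with their biadjunction. Shan's Theorem 2.1 identifies the descended restriction with ordinary Hecke restriction, whence the descended induction is simultaneously left and right adjoint to Hecke restriction, i.e.\ isomorphic to both Hecke induction and coinduction. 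This yields (b) and (c) at once, \emph{deducing} the isomorphism between induction and coinduction for $\mathcal{H}_{p,c} \subseteq \mathcal{H}_c$ rather than assuming a Frobenius structure that would imply it. Your argument for (b) via uniqueness of adjoints is close in spirit to this, but as stated it compares composites of functors across different categories rather than working in the Serre quotient, and in any case it presupposes your conditional version of (c). To repair your proposal you would either have to restrict to the cases where Hypothesis \ref{hyp} is known (real groups, $G(r,1,n)$), or adopt the paper's descent argument.
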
 \begin{proof} Part (a) follows from the definintions in \cite{BeEt} and the work \cite{Los1}. For (b) and (c) we argue as follows: the KZ functor is a Serre quotient functor (see section 5 of \cite{GGOR}), and it follows from the definitions that parabolic induction and restriction are exact functors respecting the subcategories of objects killed by KZ, so they descend to functors between the Serre quotient categories. Moreover, the biadjunction $(\mathrm{Res}_p,\mathrm{Ind}_p)$ induces a biadjoint pair on these functors between quotient categories. But \cite{Sha} Theorem 2.1 shows that the functor induced by $\mathrm{Res}_p$ is ordinary restriction for finite Hecke algebras, and it follows that the functor induced by $\mathrm{Ind}_p$ is isomorphic to ordinary coinduction (and ordinary coinduction, which is therefore isomorphic to induction) for finite Hecke algebras. This proves (b) and (c).
\end{proof} By taking a unit-counit composition, the biadjunction $(\mathrm{Res}^{\mathcal{H}_c}_{\mathcal{H}_{p,c}},\mathrm{Ind}^{\mathcal{H}_c}_{\mathcal{H}_{p,c}})$ produces an endomorphism of the identity functor of $\mathcal{H}_c$-mod
$$1 \longrightarrow \mathrm{Ind}^{\mathcal{H}_c}_{\mathcal{H}_{p,c}} \mathrm{Res}^{\mathcal{H}_c}_{\mathcal{H}_{p,c}} \longrightarrow 1.$$ When applied to any $\mathcal{H}_c$-module $M$ with $\mathrm{End}_{\mathcal{H}_c}(M)=\CC$ this gives a scalar $s_M(c)$ which we refer to as the \emph{relative Schur element for $M$}. In particular, the \emph{principal relative Schur element} is the scalar corresponding to the trivial module $s(c)=s_\CC(c)$. Since $\mathcal{H}_c$ only depends on the exponential $q_c$ of $c$ we may also write $s(q_c)$ or $|W:W_p|_{q_c}$ for this number if we wish to emphasize the dependence on $W_p$. 

We note that there is an ambiguity here, corresponding to the particular choice of an isomorphism between induction and coinduction. However, this ambiguity only affects $s(c)$ up to multiplication by a non-zero number, so does not affect the statements or proofs of our results. In specific calculations we use the normalization coming from a choice of trace form on the Hecke algebra, which is not known to exist in general.

\subsection{Induction, restriction, and the support of $L_c(E)$.}

\begin{lemma} \label{support lemma} Let $L$ be an irreducible object of $\OO_c$ with standard cover $\Delta$ and projective cover $P$ and let $S$ be a stratum and $p \in S$. The following are equivalent:
\begin{itemize}
 \item[(a)] $S \subseteq \mathrm{supp}(L)$.
 \item[(b)] The counit of the adjunction $\mathrm{Ind}_p( \mathrm{Res}_p( \Delta)) \longrightarrow \Delta$ is surjective.
 \item[(c)] The counit of the adjunction $\mathrm{Ind}_p( \mathrm{Res}_p( P)) \longrightarrow P$ is surjective.
 \item[(d)] $P$ is a summand of $\mathrm{Ind}_p (\mathrm{Res}_p(P))$.
\item[(e)] $\mathrm{KZ}(P)$ is a summand of its induction-restriction $\mathrm{Ind}^{\mathcal{H}_c}_{\mathcal{H}_{p,c}}(\mathrm{Res}^{\mathcal{H}_c}_{\mathcal{H}_{p,c}}(\mathrm{KZ}(P)))$.
 \end{itemize}
\end{lemma}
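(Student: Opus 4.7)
The plan is to use the theorem recalled in Section~\ref{biadjointness}, which asserts $p\in\mathrm{supp}(L)\Leftrightarrow \mathrm{Res}_p(L)\neq 0$, as a pivot and to relate each of (b)--(e) to the non-vanishing of $\mathrm{Res}_p(L)$. Since the stratum $S$ is precisely the locus where $W_q=W_p$ is constant, the functor $\mathrm{Res}_p$ depends only on $S$, so $S\subseteq\mathrm{supp}(L)$ is equivalent to $p\in\mathrm{supp}(L)$ and hence to $\mathrm{Res}_p(L)\neq 0$. I will handle (b) and (c) together via a simple-head argument, (d) via projectivity of $P$, and (e) via the formal properties of $\mathrm{KZ}$.

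For (a)$\Leftrightarrow$(b) and (a)$\Leftrightarrow$(c), I use that both $\Delta$ and $P$ have unique simple head $L$: for $\Delta$ by the standard-module axioms, for $P$ because $P$ is the projective cover of $L$. Hence for $M\in\{\Delta,P\}$ a map $N\to M$ is surjective if and only if its composition with the projection $M\twoheadrightarrow L$ is nonzero, since any proper submodule of $M$ lies in the radical and dies in $L$. Applied to the counit $\epsilon\colon\mathrm{Ind}_p\mathrm{Res}_p(M)\to M$, the composite $\mathrm{Ind}_p\mathrm{Res}_p(M)\to M\twoheadrightarrow L$ is adjoint, under the $(\mathrm{Ind}_p,\mathrm{Res}_p)$-adjunction and a triangle identity, to the restriction $\mathrm{Res}_p(M)\to\mathrm{Res}_p(L)$ of the quotient. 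Exactness of $\mathrm{Res}_p$ makes this restricted map itself surjective, hence nonzero exactly when $\mathrm{Res}_p(L)\neq 0$.

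For (c)$\Leftrightarrow$(d), projectivity of $P$ splits the surjection in (c) and yields (d); conversely, a summand projection $\mathrm{Ind}_p\mathrm{Res}_p(P)\to P$ composed with $P\twoheadrightarrow L$ is nonzero, forcing $\mathrm{Res}_p(L)\neq 0$ by the argument of the previous paragraph, hence (a), hence (c). For (d)$\Leftrightarrow$(e), part (b) of the theorem of Section~\ref{biadjointness} yields a natural isomorphism $\mathrm{KZ}(\mathrm{Ind}_p\mathrm{Res}_p(P))\cong\mathrm{Ind}^{\mathcal{H}_c}_{\mathcal{H}_{p,c}}\mathrm{Res}^{\mathcal{H}_c}_{\mathcal{H}_{p,c}}(\mathrm{KZ}(P))$, so additivity of $\mathrm{KZ}$ carries the decomposition in (d) to one exhibiting (e). For the reverse direction I need that parabolic induction and restriction preserve projectives (each is a left adjoint to an exact functor by biadjointness), so $\mathrm{Ind}_p\mathrm{Res}_p(P)$ is projective, and that $\mathrm{KZ}$ is fully faithful on projectives by \cite{GGOR}; an idempotent on $\mathrm{KZ}(\mathrm{Ind}_p\mathrm{Res}_p(P))$ with image $\mathrm{KZ}(P)$ then lifts to an idempotent on $\mathrm{Ind}_p\mathrm{Res}_p(P)$ whose image $Q$ is projective with $\mathrm{KZ}(Q)\cong\mathrm{KZ}(P)$, and full faithfulness promotes this isomorphism to $Q\cong P$.

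The main technical obstacle is this final (d)$\Leftrightarrow$(e) synthesis, where three properties of $\mathrm{KZ}$ must be marshaled together — intertwining with parabolic induction/restriction, preservation of projectivity by the latter, and full faithfulness on projectives — so that summand decompositions can be detected after passing to the Hecke side. The remaining equivalences are essentially formal consequences of the adjunction calculus combined with the existence of a unique simple head for $\Delta$ and $P$.
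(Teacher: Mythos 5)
Your proof is correct and follows essentially the same route as the paper's (much terser) argument: reduce (a)--(c) to the non-vanishing of $\mathrm{Res}_p(L)$ using that $L$ is the simple top of $\Delta$ and $P$ together with the adjunction, get (c) $\Leftrightarrow$ (d) from projectivity, and get (d) $\Leftrightarrow$ (e) from the intertwining property and full faithfulness of $\mathrm{KZ}$ on projectives. The only difference is that you supply details the paper leaves implicit, notably the projectivity of $\mathrm{Ind}_p\mathrm{Res}_p(P)$ and the idempotent-lifting step in (e) $\Rightarrow$ (d).
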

\begin{proof} The equivalences (a) $\iff$ (b) $\iff$ (c) follow from the facts that $S$ is in the support of a module if and only if the restriction to $p$ is non-zero, and that $L$ is the top of $\Delta$ and $P$. Since $P$ is projective, (c) implies (d). The following observation shows (d) implies (a): if $P$ is a summand of $\mathrm{Ind}_p (\mathrm{Res}_p (P))$ then $L$ is a quotient of a module induced from $S$ and hence $S \subseteq \mathrm{supp}(L)$.  Finally we check the equivalence of (d) and (e). By Theorem 5.3 of \cite{GGOR}, KZ is fully faithful on projective objects. Since it intertwines $\mathrm{Ind}_p$ and $\mathrm{Res}_p$ with $\mathrm{Ind}^{\mathcal{H}_c}_{\mathcal{H}_{p,c}}$ and $\mathrm{Res}^{\mathcal{H}_c}_{\mathcal{H}_{p,c}}$ we see that (d) and (e) are equivalent.
\end{proof} In the special case where $\mathrm{KZ}(P)$ is one-dimensional, we can improve this criterion:
\begin{corollary} \label{rank one}
With the hypotheses of Lemma \ref{support lemma}, suppose that $\mathrm{KZ}(P)$ is one-dimensional. Then $S \subseteq \mathrm{supp}(L)$ if and only if $s_{\mathrm{KZ}(P)}(q_c) \neq 0$. 
\end{corollary}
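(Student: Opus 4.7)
The plan is to invoke condition (e) of Lemma \ref{support lemma}, which says $S \subseteq \mathrm{supp}(L)$ if and only if $M := \mathrm{KZ}(P)$ is a direct summand of $\mathrm{Ind}^{\mathcal{H}_c}_{\mathcal{H}_{p,c}}(\mathrm{Res}^{\mathcal{H}_c}_{\mathcal{H}_{p,c}}(M))$, and then to translate this summand property directly into the non-vanishing of $s_M(q_c)$ using the one-dimensionality hypothesis.

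Since $M$ is one-dimensional over $\CC$, its restriction to $\mathcal{H}_{p,c}$ is one-dimensional as well (restriction does not alter the underlying vector space), hence simple, so that $\mathrm{End}_{\mathcal{H}_{p,c}}(\mathrm{Res}(M)) = \CC$. The two halves of the biadjunction of subsection \ref{biadjointness} then give
\[
\mathrm{Hom}_{\mathcal{H}_c}(M, \mathrm{Ind}(\mathrm{Res}(M))) \cong \mathrm{End}_{\mathcal{H}_{p,c}}(\mathrm{Res}(M)) \cong \CC
\]
and
\[
\mathrm{Hom}_{\mathcal{H}_c}(\mathrm{Ind}(\mathrm{Res}(M)), M) \cong \mathrm{End}_{\mathcal{H}_{p,c}}(\mathrm{Res}(M)) \cong \CC,
\]
the first being spanned by the unit $\eta_M$ of the adjunction $(\mathrm{Res},\mathrm{Ind})$ (the image of $1_{\mathrm{Res}(M)}$) and the second by the counit $\epsilon_M$ of the adjunction $(\mathrm{Ind},\mathrm{Res})$; these are precisely the morphisms whose composition $\epsilon_M \circ \eta_M$ defines the scalar $s_M(q_c) \cdot 1_M$ in subsection \ref{biadjointness}.

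It follows that every pair of morphisms $f: M \to \mathrm{Ind}(\mathrm{Res}(M))$ and $g: \mathrm{Ind}(\mathrm{Res}(M)) \to M$ has the form $f = a\,\eta_M$ and $g = b\,\epsilon_M$ for scalars $a, b \in \CC$, so
\[
g \circ f = ab \cdot (\epsilon_M \circ \eta_M) = ab \cdot s_M(q_c) \cdot 1_M.
\]
A splitting $g \circ f = 1_M$ therefore exists if and only if $s_M(q_c) \neq 0$, and by Lemma \ref{support lemma}(e) this is equivalent to $S \subseteq \mathrm{supp}(L)$.

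I do not anticipate any substantive obstacle; the only minor care is to verify that the unit and counit just identified are indeed the ones whose composition is the normalization of $s_M(q_c)$ fixed in subsection \ref{biadjointness}, which is a matter of matching conventions rather than of doing mathematics.
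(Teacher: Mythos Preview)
Your argument is correct and follows essentially the same route as the paper's own proof: reduce to condition (e) of Lemma \ref{support lemma}, use the one-dimensionality of $\mathrm{Res}(M)$ to force both relevant Hom spaces to be one-dimensional and hence spanned by the unit and counit, and conclude that a splitting exists precisely when the unit--counit composite is invertible. The only organizational difference is that the paper treats the two implications asymmetrically, observing that the direction $s_M(q_c)\neq 0 \Rightarrow M$ is a summand holds without the dimension hypothesis (since $s_M^{-1}\epsilon_M$ already splits $\eta_M$), whereas you use the hypothesis uniformly for both directions; this costs nothing in the present context.
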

\begin{proof}
We abbreviate $M=\mathrm{KZ}(P)$. If $|W:W_p|_{q_c} \neq 0$ then by definition the counit of adjunction is, up to a non-zero scalar, a right inverse to the unit of adjunction for $M$, so $M$ is a summand of its induction-restriction (here we do not use our hypothesis on the dimension of $M$)  and we may apply Lemma \ref{support lemma}. Conversely, suppose $M$ is a summand of its induction restriction. Observe that by the adjunction and our hypothesis, 
$$\mathrm{Hom}\left(\mathrm{Ind}^{\mathcal{H}_c}_{\mathcal{H}_{p,c}}(\mathrm{Res}^{\mathcal{H}_c}_{\mathcal{H}_{p,c}}(M)),M \right) \cong \mathrm{End}(\mathrm{Res}^{\mathcal{H}_c}_{\mathcal{H}_{p,c}}(M) \cong \CC.$$ This implies that the projection witnessing $M$ as a summand is a non-zero multiple of the counit of adjunction. Similarly, the injection witnessing $M$ as a summand is a non-zero multiple of the unit. This implies that the unit-counit composition is non-zero on $M$, or in other words that $s_{\mathrm{KZ}(P)}(q_c) \neq 0$.
\end{proof}

\subsection{The positive cone} \label{cone subsection}

\begin{lemma} \label{positive cone}
Suppose the parameters $c_{H,\chi} \geq 0$ are all non-negative real numbers. Then we have $S \subseteq \mathrm{supp}(L_c(\triv))$ if and only if $|W:W_p|_{q_c} \neq 0$. 
\end{lemma}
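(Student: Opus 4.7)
The plan is to reduce to Corollary \ref{rank one} applied to $L = L_c(\triv)$ with projective cover $P = P_c(\triv)$. Two preparatory identifications are needed: first, that $P_c(\triv) = \Delta_c(\triv) = \CC[V]$ in the positive cone; second, that $\mathrm{KZ}(P_c(\triv))$ is the trivial one-dimensional Hecke module $\CC$. Once both are in place, the corollary yields the desired equivalence, since by definition $s(q_c) = s_\CC(q_c) = |W:W_p|_{q_c}$ is the principal relative Schur element.

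The first identification is immediate. The hypothesis $c_{H,\chi} \geq 0$ places us in the positive cone, where the paper has already observed at the end of the Reparametrization subsection that $\Delta_c(\triv) = \CC[V]$ is projective in $\OO_c$. Hence $P_c(\triv) = \Delta_c(\triv)$.

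For the second identification, I would argue as follows. The localization $\Delta_c(\triv)|_{V^\circ} \cong \CC[V^\circ]$ is a rank-one module over $\CC[V^\circ]$, so its descent to $V^\circ / W$ equipped with the Dunkl (KZ) connection is a local system of rank one on $V^\circ/W$; hence $\mathrm{KZ}(\Delta_c(\triv))$ is one-dimensional. To identify the action of $\mathcal{H}_c$, observe that the constant function $1 \in \CC[V]$ is killed by every Dunkl operator, since $\partial_y(1) = 0$ and $(1-r)(1) = 0$ for every $y \in V$ and $r \in R$. Because $1$ is also $W$-invariant, it descends to a single-valued flat section on $V^\circ/W$, so the monodromy $T_H$ around any reflecting hyperplane acts on $\mathrm{KZ}(\Delta_c(\triv))$ by the scalar $1$. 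Since $q_{H,\triv} = 1$ is precisely the eigenvalue of $T_H$ corresponding to the factor $\chi = \triv$ in the Hecke relation $\prod_{\chi}(T_H - \chi(r_H) q_{H,\chi}) = 0$, we conclude that $\mathrm{KZ}(\Delta_c(\triv)) \cong \CC$ is the trivial one-dimensional Hecke module. Consequently $s_{\mathrm{KZ}(P_c(\triv))}(q_c) = |W:W_p|_{q_c}$, and Corollary \ref{rank one} gives the result.

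The main obstacle is the identification of $\mathrm{KZ}(\Delta_c(\triv))$ with the trivial Hecke module. Although this is essentially a direct monodromy calculation on the canonical flat section $1$, one must be careful to match the conventions: the normalization $q_{H,\triv} = 1$ built into the parametrization, the choice of generator $T_H$ as a positive-loop monodromy, and the compatibility of the KZ functor with restriction to $V^\circ$ all need to be aligned so that the trivial monodromy on the section $1$ really translates into the trivial Hecke module on the nose rather than, say, a twist by a one-dimensional character.
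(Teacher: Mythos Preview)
Your proposal is correct and follows the same route as the paper: use the positivity assumption to get $\Delta_c(\triv)$ projective, then invoke Corollary~\ref{rank one}. The paper's proof is terser, treating the identification $\mathrm{KZ}(\Delta_c(\triv)) \cong \CC$ as implicit (it is essentially the definition of the KZ functor on the polynomial representation, together with the observation already made that the trivial $\CC B$-module factors through $\mathcal{H}_c$); your additional paragraph justifying this via the flat section $1$ is a reasonable elaboration rather than a different argument.
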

\begin{proof}
Since the parameters are non-negative, we have $c_E \geq 0$ for all representations $E$ of $W$, and it follows that $\Delta_c(\triv)$ is projective. Applying Corollary \ref{rank one} now implies the result.
\end{proof}

\subsection{Counit of adjunction} We now switch gears and study the counit of adjunction for standard modules in detail; our aim is to describe it explicitly in terms of a joint eigenfunction for the Dunkl operators. Fix a stratum $S$ and a point $\lambda \in V^*$ with $W_\lambda=W_S$. Proposition 3.13 of \cite{BeEt} together with \cite{Los1}, sections 3 and 4, implies that the parabolic induction and restriction functors may be described as follows: the restriction functor applied to a module $M$ in $\OO_c$ is the composite of the completion at $0 \in V$ functor $M \mapsto \widehat{M}$, which is an equivalence $\OO_c \cong \widehat{\OO_c}$, the functor $E_{\overline{\lambda}}:\widehat{O_c} \rightarrow \OO_{c,\overline{\lambda}}$ given by
$$E_{\overline{\lambda}}(M)=\{f \in M \ | \ \hbox{for all $h \in \CC[V^*]^W$ there is $N>0$ with $(h-h(\lambda))^N f=0$} \},$$ which depends only on the $W$-orbit ${\overline{\lambda}}=W \lambda$, and an equivalence $\psi_\lambda:\OO_{c,\overline{\lambda}} \rightarrow \OO_c(W_\lambda,V/V^{W_\lambda})$ given by
$$\psi_\lambda(M)=\{f \in M_\lambda \ \vert \  y f=\lambda(y)f \quad \hbox{for all $y \in V^{W_\lambda}$} \},$$ where we write $$M_\lambda=\{f \in M \ \vert \ \hbox{for each $y \in V$ there is $N$ with $(y-\lambda(y))^N f=0$} \}.$$ The induction functor is the composite of the inverse equivalence $\psi_\lambda^{-1}$, the completion at zero functor $\widehat{\cdot}$, which is left adjoint to $E_\lambda$, and the inverse equivalence $E_0$ to the completion at zero functor. Thus the counit for $(\mathrm{Ind},\mathrm{Res})$ is surjective on a module $M \in \OO_c$ if and only if the counit 
\begin{align*}
\CC[[V]] &\otimes_{\CC[V]} E_\lambda(\widehat{M}) \longrightarrow \widehat{M} \\
&f \otimes m  \longmapsto  fm
\end{align*}
is surjective. By Nakayama's lemma, this may be tested upon taking the $0$-fiber. Thus for example the counit for $\Delta_c(E)$ is surjective if and only if there is $f \in \widehat{\Delta_c(E)}$ with $$f(0) \neq 0 \quad \text{and} \quad (h-h(\lambda))^N f=0 \quad \hbox{for some $N>0$ and all $h \in \CC[V^*]^W$.}$$ Moreover, since $w M_\lambda=M_{w \lambda}$ and 
$$E_{\overline{\lambda}}(M)=\bigoplus_{\lambda \in \overline{\lambda}} M_\lambda,$$ the counit for $\Delta_c(E)$ is surjective if and only if there is $f \in \widehat{\Delta_c(E)}$ with $$f(0) \neq 0 \quad \text{and} \quad (y-\lambda(y))^N f=0 \quad \hbox{for some $N>0$ and all $y \in V$.}$$ We will therefore construct particular elements of $\widehat{\Delta_c(E)}_\lambda$ in the next subsection.

\subsection{The Dunkl-de Rham complex and $(W,E)$-exponential functions}

We will write $\CC(\cc)$ for the quotient field of the ring $\CC[\cc]=\CC[\mathcal{C}]$ of polynomial functions on the parameter space $\mathcal{C}$, where in we use the symbol $\cc$ for a formal parameter, and reserve $c$ for its specialization to a number. For each non-negative integer $d$, the Dunkl operators define a non-degenerate pairing
$$(\CC(\cc)[V^*]^d \otimes E^*) \otimes (\CC(\cc)[V]^d \otimes E) \longrightarrow \CC(\cc)$$ given on decomposable vectors by the formula
$$(f \otimes \phi) \otimes (g \otimes e) \mapsto \phi(f \cdot (g \otimes e)) \quad \hbox{for $f \in \CC[V^*]^d$, $\phi \in E^*$, $g \in \CC[V]^d$, and $e \in E$,}$$ where $f \cdot (g \otimes e)$ denotes the action of $f \in H_\cc$ on $g \otimes e \in \Delta_\cc(E)$; since $f$ and $g$ have the same degree this produces a vector in $E$ on which we may then evaluate $\phi$. For each non-negative integer $d$, let $f_{d,I,j} \in \CC(\cc)[V]^d \otimes E$ and $f_{d,I,j}^\vee \in \CC(\cc)[V^*] \otimes E^*$ be dual bases with respect to this pairing (where $I$ runs over an indexing set for a basis for the space of polynomials of degree $d$ and $j$ runs over an indexing set for a basis of $E$). The \emph{$(W,E)$-exponential function} is the formal series (a power series at the origin in $V^* \times V$ with coefficients that are rational functions on the parameter space $\mathcal{C}$)
$$\mathrm{exp}_E=\sum_{d,I,j} f_{d,I,j} f_{d,I,j}^\vee,$$ which is independent of the bases chosen. We shall see that it converges uniformly on compact subsets of the complement $\mathcal{C}_E$ of a certain hyperplane arrangement in $\mathcal{C}$,
$$\mathcal{C}_E=\mathcal{C} \setminus \bigcup_{\substack{F \in \mathrm{Irr}(\CC W) \\ d \in \ZZ_{>0}}} \{d=c_F-c_E \}$$ and it therefore defines a holomorphic function on $V^* \times V \times \mathcal{C}_E$ with values in $E^* \otimes E=\mathrm{End}(E)$. 

Our first aim here is to give a bound on the denominators that can appear in $\mathrm{exp}_E$. The tool we need for this is the \emph{Dunkl-de Rham complex}, introduced in \cite{DuOp} in the case $E=\triv$, $$K^\bullet=\CC(\cc)[V] \otimes E \otimes \Lambda^\bullet V^*,$$ with $W$-equivariant differential $d_\cc$ defined by the formula
$$d_\cc(f \otimes \omega)=\sum_i y_i f \otimes dx_i \wedge \omega$$ for any choice of dual bases $y_1,\dots,y_n$ of $V$ and $x_1,\dots,x_n$ of $V^*$. Here we write $dx_i$ in place of $x_i$ simply as a formal device to distinguish elements of $\Lambda V^*$ from elements of $\CC[V]$. We write
$$\partial(f \otimes dx_{i_1} \cdots dx_{i_p})=\sum_i (-1)^{j+1} x_{i_j} f \otimes dx_{i_1} \cdots \widehat{dx_{i_j}} \cdots dx_{i_p} \quad \hbox{for $f \in \CC(\cc)[V] \otimes E$}$$ for the usual Koszul differential on the complex $K^\bullet$.

\begin{lemma} \label{derham lemma}
\begin{itemize}
\item[(a)] We have $$d_\cc \partial+\partial d_\cc=d+p+\cc_E-\cc_F$$ as operators on the $F$-isotypic component of the polynomial degree $d$ piece $$(\CC(\cc)[V]^d \otimes E \otimes \Lambda^p V^*)_F$$ of $K^p$.
\item[(b)] For each $d$ we have
$$
\prod_{\substack{E \in \mathrm{Irr}(\CC W) \\ 1 \leq m \leq d}} (m+\cc_E-\cc_F) \sum_I f_{d,I}^\vee f_{d,I} \in \CC[\cc][V].$$ for any choice of dual bases.
\item[(c)] The series $\mathrm{exp}_E$ converges, uniformly on compact subsets of $\mathcal{C}_E$, to a holomorphic function on $V \times V^*$ with values in $E^* \otimes E$. 
\end{itemize}
\end{lemma}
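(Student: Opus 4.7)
The plan is to prove the three parts in order, with (a) providing the technical heart and (b), (c) following more routinely. For part (a), I would perform a direct Cartan-type calculation. Applying $d_\cc \partial + \partial d_\cc$ to a typical element $f \otimes dx_{i_1} \wedge \cdots \wedge dx_{i_p}$ with $f \in \CC(\cc)[V]^d \otimes E$, the ``diagonal'' contribution gives $\sum_k x_k y_k(f) \otimes dx_I$, and the Cherednik-algebra relation $\sum_k x_k y_k = \eu - \sum_r c_r(1-r)$ combined with the fact that $\eu$ acts as $d + \cc_E$ on $\CC(\cc)[V]^d \otimes E$ rewrites this as $(d + \cc_E)(f \otimes dx_I) - \sum_r c_r(1-r)(f) \otimes dx_I$. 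The ``off-diagonal'' terms yield $\sum_{j, k}(-1)^{j+1}[y_k, x_{i_j}](f) \otimes dx_k \wedge dx_{I \setminus j}$; the scalar part $\la y_k, x_{i_j} \ra = \delta_{k, i_j}$ of the commutator contributes the standard Koszul piece $p \cdot (f \otimes dx_I)$, while the reflection part, after collecting $\sum_k \la \alpha_r, y_k \ra dx_k = d\alpha_r$, gives $\sum_r c_r r(f) \otimes (r(dx_I) - dx_I)$ via the combinatorial identity $r(dx_I) - dx_I = -\sum_j (-1)^{j+1} \beta_r(x_{i_j}) d\alpha_r \wedge dx_{I \setminus j}$ (where $\beta_r(x) = (x-r(x))/\alpha_r$), verified by expanding $r(dx_I) = \prod_j(dx_{i_j} - \beta_r(x_{i_j}) d\alpha_r)$ and discarding terms with $d\alpha_r \wedge d\alpha_r$. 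Summing all contributions, the two non-diagonal $W$-action terms combine into $\sum_r c_r(1-r)(f \otimes dx_I)$, the diagonal action of $\sum_r c_r(1-r)$ on the triple tensor product, which acts by the scalar $\cc_F$ on the $F$-isotypic.

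For part (b), I would use (a) iteratively to bound the denominators of the reproducing kernel $\sum_I f^\vee_{d, I} f_{d, I}$. Specializing (a) to $p = 0$ gives $\partial d_\cc = d' + \cc_E - \cc_{F'}$ on the $F'$-isotypic of $\CC(\cc)[V]^{d'} \otimes E$, so $(d' + \cc_E - \cc_{F'})^{-1} d_\cc$ is a section of the Koszul differential $\partial$ there. Iterating this from degree $d$ down to $0$, any $f \in \CC(\cc)[V]^d \otimes E$ may be written as an explicit $\CC[V]$-polynomial combination of iterated Dunkl-derivatives of $f$, with scalar coefficients in $\CC(\cc)$ whose denominators divide $\prod_{m=1}^d (m + \cc_E - \cc_{F_m})$ along each iteration path for some sequence $F_1, \ldots, F_d$ of isotypic types. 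Taking the union over all paths shows that the denominators appearing in the dual basis, and hence in $\sum_I f^\vee_{d, I} f_{d, I}$, divide $\prod_{F \in \mathrm{Irr}(\CC W), 1 \leq m \leq d}(m + \cc_E - \cc_F)$, with each factor appearing to multiplicity at most one.

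For part (c), convergence and holomorphy then follow by standard estimates. Fix compact subsets $K \subset \mathcal{C}_E$ and $B \subset V \times V^*$. By (b), the product $\prod_{F, 1 \leq m \leq d}(m + \cc_E - \cc_F) \cdot \sum_I f^\vee_{d, I}(\lambda) f_{d, I}(x)$ is a polynomial in $(\cc, x, \lambda)$ whose total degree grows only linearly in $d$, so its sup-norm on $K \times B$ is bounded by $C^d$ for some constant $C$. On the other hand, $\prod_{F, 1 \leq m \leq d}|m + \cc_E - \cc_F|$ is bounded below on $K$ by $c_0 \cdot (d!)^{|\mathrm{Irr}(\CC W)|}$, so the $d$-th term of $\mathrm{exp}_E$ on $K \times B$ is at most $C^d/(c_0 (d!)^{|\mathrm{Irr}(\CC W)|})$, which is summable. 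Uniform convergence and holomorphy of $\mathrm{exp}_E$ on $\mathcal{C}_E \times V \times V^*$ follow.

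The main difficulty lies in part (a), namely in establishing the combinatorial identity for $r(dx_I) - dx_I$ and verifying that the ``partial'' action of $\sum_r c_r(1-r)$ on the first two tensor factors combines with the reflection part of the commutators $[y_k, x_{i_j}]$ to produce exactly the diagonal action on all three factors; once this cancellation is in hand, parts (b) and (c) reduce to routine linear-algebraic and analytic bookkeeping.
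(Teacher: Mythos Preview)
Your arguments for (a) and (b) are essentially the same as the paper's. The paper cites \cite{DuOp}, Proposition 2.4, for (a), which is exactly the Cartan-type computation you outline; and for (b) the paper builds the dual basis $f_{I,j}$ by the recursion
\[
f_{I,j}=\sum_{F,i}\frac{1}{d+\cc_E-\cc_F}\,e_F\bigl(x_i f_{I-\epsilon_i,j}\bigr),
\]
obtained from the homotopy identity at $p=1$ applied to the closed form $\omega=\sum_i f_{I-\epsilon_i,j}\,dx_i$. Your use of the $p=0$ identity $\partial d_\cc=d'+\cc_E-\cc_{F'}$ to descend in degree is the dual version of the same recursion and yields the same denominator bound.

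There is, however, a genuine gap in your argument for (c). You assert that because $P_d:=\prod_{F,m}(m+\cc_E-\cc_F)\cdot\sum_I f^\vee_{d,I}f_{d,I}$ has total degree growing linearly in $d$, its sup-norm on $K\times B$ is bounded by $C^d$. Degree alone does not control the sup-norm: the polynomial $d!\,x$ has degree one but unbounded sup-norm as $d\to\infty$. What is missing is a bound on the \emph{coefficients} of $P_d$, and part (b) as stated gives none. The paper obtains convergence differently: it uses the recursion above directly to estimate $|f_{I,j}|_\infty$ in an orthonormal basis, noting that each step multiplies by at most $\dfrac{n\,|\mathrm{Irr}(\CC W)|\,\sqrt d}{d-k}$ on a compact $K\subset\mathcal{C}_E$ (here $k=\max_K|c_F-c_E|$, and one uses $|e_F|_\infty\le 1$ and $|x_i\,\cdot\,|_\infty\le\sqrt d$). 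Iterating gives $|f_{I,j}|_\infty$ dominated by a summable sequence. Your iterative construction in (b) contains exactly the ingredients needed to extract such a coefficient estimate, but you must actually carry it out; the appeal to the degree of $P_d$ alone is insufficient.
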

\begin{proof}
Part (a) is proved by the same calculation as in Proposition 2.4 of \cite{DuOp}. We prove (b) and (c). Fix a bases $y_1,\dots,y_n$ of $V$ and $e_1,\dots,e_m$ of $E$ with dual bases $x_1,\dots,x_n$ of $V^*$ and $\phi_1,\dots,\phi_m$ of $E^*$. For each $1 \leq j \leq m$ and each multi-index $I \in \ZZ_{\geq 0}^n$ let $f_{I,j}^\vee=y^I \phi_j$ (during this proof we suppress the index $d$). We will construct the dual basis $f_{I,j}$ of $\CC(\cc)[V] \otimes E$ inductively starting with $f_{0,j}=e_j$ for $1 \leq j \leq m$. Fixing a multi-index $I$ of degree $d>0$ and some $1 \leq j \leq m$ and inductively assume we have constructed the dual basis elements $f_{I-\epsilon_i,j}$ for all $1 \leq i \leq n$, where $\epsilon_i$ denotes the multi-index with $1$ in the $i$th position and zeros elsewhere, and we have $f_{I,j}=0$ by convention if some entry of $I$ is negative. By choice of $f_{I,j}^\vee$ it suffices to find $f_{I,j}$ with
$$y_i f_{I,j}=f_{I-\epsilon_i,j} \quad \hbox{for all $1 \leq i \leq n$.}$$ We put
$$\omega=\sum f_{I-\epsilon_i,j} dx_i$$ and observe that by our inductive hypothesis we have $d_\cc \omega=0$. For each $F \in \mathrm{Irr}(\CC W)$ write $e_F \in \CC W$ for the corresponding central idempotent, which acts on a $\CC W$-module as the projection onto the $F$-isotypic component. Since $d_\cc$ commutes with $\CC W$, defining $\omega_F=e_F \omega$ we have $d_\cc \omega_F=0$. Thus using (a)
$$d_\cc \partial \omega_F=(d+\cc_E-\cc_F) \omega_F \quad \implies \quad \omega=d_\cc \left(\sum_F \frac{1}{d+\cc_E-\cc_F} \partial \omega_F \right).$$ Thus we may put 
\begin{equation} \label{recursion} f_{I,j}=\sum_F \frac{1}{d+\cc_E-\cc_F} \partial \omega_F=\sum_{F,i} \frac{1}{d+\cc_E-\cc_F} e_F( x_i f_{I-\epsilon_i,j}). \end{equation} This establishes (b).

Now assume $x_1,\dots,x_n$ and $e_1,\dots,e_m$ are an orthonormal bases of $V^*$ and $E$ with respect to $W$-invariant Hermitian forms. Then 
$$\frac{1}{\sqrt{I!}} x^I e_j \quad \hbox{for $I$ of degree $d$ and $1 \leq j \leq m$, where $I!=i_1 ! i_2 ! \cdots i_n!,$}$$ is an orthonormal basis of $\CC[V]^d \otimes E$ with respect to the positive definite Hermitian form induced by non-deformed partial differentiation. For $f \in \CC[V]^d \otimes E$ we write $|f|_\infty$ for the supremum norm with respect to this basis, equal to the maximum absolute value of a coefficient of $f$ on the basis. Since $e_F$ is an orthogonal projection for the inner product with respect to which this basis is orthonormal, we have the bound $|e_F|_\infty \leq 1$ for the operator norm of $e_F$ with respect to the supremum norm on this basis. Moreover for $f \in \CC[V]^{d-1} \otimes E$ and $1 \leq i \leq n$ we have $|x_i f|_\infty \leq \sqrt{d} |f|_\infty$. Fix a compact subset $K$ of $\mathcal{C}_E$ and let $k$ be the maximum value of $|c_F-c_E|$ on $K$ over all $F$ and  let $d>k$. Putting all this together with \eqref{recursion} gives
$$|f_{I,j}|_\infty \leq \frac{n |\mathrm{Irr}(\CC W)| \sqrt{d}}{d-k} M_{d-1}, $$ where  $M_{d-1}$ is the maximum value of $|f_{I',j}|_\infty$ for $I'$ of degree $d-1$. This proves (c).

\end{proof}

\subsection{The renormalized $W$-exponential functions} \label{BA}

We will assume from now on that $E=\triv$ is the trivial representation, abbreviate $e=\mathrm{exp}_\triv$, and define a certain renormalization of a specialized version of $e$. This renormalization depends in a crucial way on the specialization to a point $\lambda \in V^*$. If $y \neq 0$ we may extend $y=y_1$ to a basis $y_1,\dots,y_n$ of $V$ and fix a monomial basis $f_{d,I}^\vee=y^I$ as above. We then have $y(f_{d,I})=0$ if $I_1=0$ and $y(f_{d,I})=f_{d-1,I-\epsilon_1}$ if $I_1>0$, as in the proof of Lemma \ref{derham lemma}. It follows that, if we define $$y \cdot e=\sum f_{d,I}^\vee y(f_{d,I})$$ by using the action of the Dunkl operators on the $x$-part of $e$, then
\begin{equation} \label{eig equation}
y \cdot e=\sum_{d,I} y f_{d,I}^\vee f_{d,I}=ye.
\end{equation}

Given $\lambda \in V^*$ we put
$$e_\lambda(x,c)=\sum_{d,i} f_{d,I}^\vee(\lambda) f_{d,I},$$ and note that by \eqref{eig equation} we have
$$y \cdot e_\lambda=\lambda(y) e_\lambda \quad \text{and} \quad w e_\lambda=e_{w \lambda} \quad \hbox{for all $y \in V$ and $w \in W$,}$$ so that in particular $e_\lambda$ is fixed by $W_\lambda$. 

We choose an entire analytic function $F_\lambda(c)$ on $\mathcal{C}$ with the properties that 
\begin{enumerate}
\item[(1)] For each $d$, 
$$F_\lambda(c) \sum_i f_{d,i}^\vee(\lambda) f_{d,i}$$ is an analytic function of $c$, and 
\item[(2)] for each $c \in \mathcal{C}$, there is some $d$ with 
$$F_\lambda(c) \sum_i f_{d,i}^\vee(\lambda) f_{d,i} \neq 0.$$
\end{enumerate} The existence of $F_\lambda$ is guaranteed by the bound on the denominators given in part (b) of Lemma \ref{derham lemma}. The set of zeros of $F_\lambda(c)$ is precisely the union of the zero sets of the least common denominators of the sums $\sum_i f_{d,i}^\vee(\lambda) f_{d,i}$, and in particular it consists of certain of the hyperplanes defined by the equations $\cc_E-m=0$, for $m$ a positive integer and $E$ an irrep of $\CC W$. These are positive hyperplanes in the sense defined in the introduction.

We define the \emph{renormalized specialized $W$-exponential function} $q_\lambda(x,c)$ by
$$q_\lambda(x,c)=F_\lambda(c) e_\lambda(x,c).$$ It is a formal power series on $V$ whose coefficients are analytic functions of $c$ without common zeros. So for any fixed $c$, the function $q_\lambda=q_\lambda(x,c)$ is a non-zero element of $\widehat{\CC[V]}$ satisfying
$$y q_\lambda=\lambda(y) q_\lambda \quad \hbox{for all $y \in V$} \quad \text{and} \quad w q_\lambda=q_\lambda \quad \hbox{for all $w \in W_\lambda$.}$$

Note that its definition depends on our choice of $F_\lambda$, or in other words, it is really only well-defined up to multiplication by a function of the form $\mathrm{exp}(f)$, for $f$ an analytic function on $\mathcal{C}$; for a fixed $c$, it is well-defined up to multiplication by a non-zero scalar. This ambiguity does not affect the arguments in the remainder of the paper, but would be very interesting to resolve in some natural fashion.

\begin{theorem} \label{sol theorem} Fix a stratum $S$ and let $\lambda \in V^*$ be a point with $W_\lambda=W_S$. The stratum $S$ is in the support of $L_c(\triv)$ if and only if $F_\lambda(c) \neq 0$. In particular, the zero set of $F_\lambda$ depends only on $S$ and hence we may choose $F_S=F_\lambda$ to be independent of $\lambda \in S$.
\end{theorem}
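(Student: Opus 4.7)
The plan is to combine the characterization of support through counit surjectivity (developed in the previous two subsections) with the Dunkl--de Rham obstruction theory encoded by $F_\lambda$.

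For the forward direction, suppose $F_\lambda(c) \neq 0$. Then $q_\lambda(x, c) = F_\lambda(c) e_\lambda(x, c) \in \widehat{\Delta_c(\triv)} = \CC[[V]]$ has constant term $F_\lambda(c) \neq 0$, and by \eqref{eig equation} satisfies $y q_\lambda = \lambda(y) q_\lambda$ for every $y \in V$. By the counit-surjectivity criterion of the previous subsection (applied with $N=1$), the counit for $\Delta_c(\triv)$ is surjective, and Lemma \ref{support lemma}(b) then yields $S \subseteq \mathrm{supp}(L_c(\triv))$.

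For the reverse direction, assuming $F_\lambda(c_0) = 0$, I show that no generalized $\lambda$-eigenvector $f \in \CC[[V]]$ has $f(0) \neq 0$. First handle honest eigenvectors: writing $f = \sum_d f_d$ in homogeneous pieces and normalizing $f_0 = 1$, the equation $y f = \lambda(y) f$ becomes the recursion $y f_{d+1} = \lambda(y) f_d$. By the explicit formula \eqref{recursion} derived from Lemma \ref{derham lemma}(a), this recursion has a solution at step $d+1$ precisely when, for every irreducible $F$ of $\CC W$ with $c_F = d+1$, the $F$-isotypic projection of the closed form $\omega_d = \sum_i \lambda(y_i) f_d\, dx_i$ vanishes. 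The defining properties (1) and (2) of $F_\lambda$ in subsection \ref{BA} say exactly that $F_\lambda(c_0) = 0$ if and only if some step $d_0$ presents such a genuine obstruction, whence no honest eigenvector with $f_0 = 1$ exists at $c_0$ and by linearity none with $f_0 \neq 0$ either.

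Lifting this conclusion to generalized eigenvectors is the principal technical obstacle: expanding $(y - \lambda(y))^N f = 0$ degree by degree produces polynomial identities in $y \in V$ involving the iterated Dunkl images $y^k(f_k)$, and one must combine these identities---ruling out the binomial-weighted cancellations that trivially hold on genuine eigenvectors---with the honest-case Dunkl--de Rham obstruction to force $f_0 = 0$. The natural way to do this is to observe that the cyclic $\CC[T_y : y \in V]$-submodule of $\CC[[V]]$ generated by such an $f$, where $T_y = y - \lambda(y)$, is finite-dimensional and has a nonzero socle of honest eigenvectors, then argue that the evaluation-at-$0$ map factors through this socle in a manner compatible with the recursion above. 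The final clause of the theorem, that the zero set of $F_\lambda$ depends only on the stratum $S$ containing $\lambda$, then follows from the biconditional just proved, as this zero set equals $\{c : S \not\subseteq \mathrm{supp}(L_c(\triv))\}$, which depends only on $S$ and $c$.
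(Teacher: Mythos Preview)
Your forward direction is correct and essentially what the paper does: $q_\lambda$ is an honest $\lambda$-eigenvector with constant term $F_\lambda(c)$, so when this is nonzero the counit criterion applies.

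The reverse direction has a genuine gap. You correctly flag the passage from honest to generalized eigenvectors as the ``principal technical obstacle,'' but your sketch does not close it. The assertion that evaluation at $0$ ``factors through the socle'' of the cyclic $\CC[T_y]$-module is neither proved nor plausible as stated: evaluation at $0$ is defined on the whole module, and there is no reason for it to vanish off the socle. The entire difficulty is precisely that a generalized eigenvector might have nonzero constant term when every honest one has constant term zero, and nothing in your outline rules this out. There is also a smaller issue already in the honest case: properties (1) and (2) of $F_\lambda$ control the particular solution $e_\lambda$, but at parameters where some $c_F=d$ the recursion is not uniquely solvable, so a pole in $e_\lambda$ does not by itself exclude some \emph{other} honest eigenvector with nonzero constant term. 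What one really needs is that the space of honest $\lambda$-eigenvectors is one-dimensional, and this you have not argued.

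The paper bypasses all of this with a structural argument rather than a recursive one. Using results of Shan and Bezrukavnikov--Etingof it identifies $\mathrm{res}_\lambda(\Delta_c(\triv))$ with the parabolic standard module $\Delta_c(W_S,\triv)$; the one-dimensional lowest weight space of that standard module shows immediately that $q_\lambda$ is, up to scalar, the \emph{unique} $W_S$-fixed honest $\lambda$-eigenvector, and that it generates $\mathrm{res}_\lambda(\Delta_c(\triv))$ over $\CC[V/V^{W_\lambda}]$. A formula from \cite{Los1} is then invoked to conclude that the constant term of \emph{every} element of the full generalized eigenspace $\widehat{\CC[V]}_\lambda$ is a multiple of $q_\lambda(0)=F_\lambda(c)$. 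This disposes of honest and generalized eigenvectors simultaneously, without any degree-by-degree analysis of the recursion.
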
 
\begin{proof}
We write $\mathrm{Res}=\mathrm{Res}_p$ for the restriction functor corresponding to some point $p \in S$ and fix an isomorphism $\mathrm{Res} \cong \mathrm{res}_\lambda$ whose existence is guaranteed by \cite{Los1}. By proposition 1.9 of \cite{Sha}, the module $\mathrm{Res}(\Delta_c(\triv))$ has a standard filtration, and by Proposition 3.14 of \cite{BeEt} we therefore have $\mathrm{Res}(\Delta_c(\triv)) \cong \Delta_c(W_S,\triv)$. Now examining the definition of $\mathrm{res}_\lambda$ shows that 
$$\Delta_c(W_S,\triv) \cong \mathrm{res}_\lambda(\Delta_c(\triv))=\{f \in \widehat{\CC[V]}_\lambda \ \vert \ yf=\lambda(y)f \quad \hbox{for all $y \in V^{W_S}$}\}$$ and in particular $q_\lambda$ is characterized up to scalars as the unique non-zero $W_S$-fixed element of this space with $y q_\lambda=\lambda(y) q_\lambda$ (note here that there is an origin shift in the definition of $\mathrm{res}$: the Dunkl operators for $W_S$ act as $y-\lambda(y)$ and are thus locally nilpotent), and so $\mathrm{res}_\lambda(\Delta_c(\triv))$ is generated by $q_\lambda$ as a $\CC[V/V^{W_\lambda}]$-module. Formula (3) from subsection 2.1 of \cite{Los1} shows that the constant term of every other element of $\widehat{\CC[V]}_\lambda$ is a multiple of $F_\lambda(c)$ and this proves the lemma.
\end{proof}

As an example, suppose $W=\{1,-1 \}$ is the group of order two acting on $V=\CC$. Direct calculation with Dunkl operators shows that the $W$-exponential function is
\begin{align*}
e(x,y,c)=1+&\sum_{n=1}^\infty \frac{y^{2n} x^{2n}}{(1-2c) \cdot 2 \cdot (3-2c) \cdot 4 \cdots (2n-1-2c) \cdot 2n} + \\
&\sum_{n=0}^\infty \frac{y^{2n+1} x^{2n+1}}{(1-2c) \cdot 2 \cdot (3-2c) \cdot 4 \cdots 2n \cdot (2n+1-2c)} 
\end{align*} so that for $\lambda \neq 0$ we may take 
$$F_\lambda(c)=\frac{\Gamma(1-c)}{\Gamma(1-2c)}.$$

\subsection{Support theorem} Finally we can complete the calculation of the support of $L_c$. We recall that a \emph{positive} hyperplane is a hyperplane $C \subseteq \mathcal{C}$ of the form
$$C=\left\{\sum a_{H,\chi} c_{H,\chi}=a \right\} \quad \hbox{with $a_{H,\chi} \geq 0$ and $a >0$.}$$ Note that if $C$ is a positive hyperplane, then the set of points of $C$ with non-negative real coordinates is Zariski-dense in $C$, and in particular a positive hyperplane is uniquely determined by its set of points with non-negative real coordinates.
\begin{theorem} \label{support theorem2}
Let $S$ be a stratum with relative principal Schur element $s=|W:W_S|_{q_c}$. Then $S$ is contained in the support of $L_c(\triv)$ if and only if $c$ is not contained in any positive hyperplane $C$ such that $s$ vanishes on intersection of $C$ with the positive cone.
\end{theorem}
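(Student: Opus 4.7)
The plan is to combine Theorem \ref{ren support theorem} and Lemma \ref{positive cone} with the structural description of the zero locus $Z(F_S)$. By Theorem \ref{ren support theorem}, $S \subseteq \mathrm{supp}(L_c(\triv))$ if and only if $F_S(c) \neq 0$, and from subsection \ref{BA} the set $Z(F_S)$ is a union of positive hyperplanes. The theorem therefore reduces to the intrinsic characterization: a positive hyperplane $C$ is an irreducible component of $Z(F_S)$ if and only if $s = |W:W_S|_{q_c}$ vanishes on $C \cap \{c_{H,\chi} \geq 0\}$. Granting this equivalence, $c \notin Z(F_S)$ is the same as $c$ avoiding every such positive hyperplane, which is exactly the statement of the theorem.

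For the easy direction, if $C$ is a component of $Z(F_S)$ then $F_S$ vanishes identically on $C$, so for any $c' \in C \cap \{c_{H,\chi} \geq 0\}$ Theorem \ref{ren support theorem} gives $S \not\subseteq \mathrm{supp}(L_{c'}(\triv))$, whereupon Lemma \ref{positive cone} (applicable since $c'$ lies in the positive cone) forces $s(q_{c'}) = 0$. Conversely, if $s$ vanishes on $C \cap \{c_{H,\chi} \geq 0\}$, running the same implications in reverse---Lemma \ref{positive cone} to remove $S$ from the support, then Theorem \ref{ren support theorem}---shows that $F_S$ vanishes on $C \cap \{c_{H,\chi} \geq 0\}$. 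Since this set is Zariski-dense in $C$ (as noted just before the theorem statement) and $Z(F_S)$ is a union of complex affine hyperplanes, the irreducible hyperplane $C$ must be one of its components.

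The main subtlety is the density promotion in the converse step: upgrading vanishing of the entire function $F_S$ on the real positive locus $C \cap \{c_{H,\chi} \geq 0\}$ to vanishing on the full complex hyperplane $C$. Because $Z(F_S)$ is already known to be a union of affine hyperplanes, its intersection with the irreducible $C$ is either all of $C$ or an analytic subset of strictly smaller dimension, and Zariski-density of the real positive locus rules out the latter. This is the one point at which we use the special form of $Z(F_S)$ in an essential way; without this structural input one would instead have to invoke the identity theorem on the totally real slice $C \cap \mathcal{C}_\RR$, which has real dimension $\dim_\CC C$ and meets $\{c_{H,\chi} \geq 0\}$ in a set with non-empty interior, forcing $F_S|_C \equiv 0$ from vanishing there.
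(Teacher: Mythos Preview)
Your proof is correct and follows essentially the same approach as the paper's: both reduce to showing that the positive hyperplanes in $Z(F_S)$ are exactly those on whose positive part $s$ vanishes, using Lemma \ref{positive cone} for each direction and the Zariski-density of the positive part to promote vanishing on the cone to vanishing on the whole hyperplane. The paper's argument is slightly terser about the density step, but the logic is identical.
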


\begin{proof}
By Theorem \ref{sol theorem} the set of $c$ for which $S$ is not contained in the support of $L_c(\triv)$ is a union of positive hyperplanes. Let $C$ be such a hyperplane. To points in the intersection of this hyperplane with the positive cone we may apply Lemma \ref{positive cone}, and this implies that $s$ vanishes there. Conversely, given a positive hyperplane $C$ such that $s$ vanishes on its positive part, we may again apply Lemma \ref{positive cone} to conclude that $S$ is not contained in the support of $L_c(\triv)$ for $c$ in the intersection of $C$ with the positive cone. Thus the intersection of $C$ with the positive cone is contained in union of hyperplanes provided by Theorem \ref{sol theorem}, and this implies that $C$ is one of these hyperplanes. 
 \end{proof}

\subsection{Symmetrizing trace} \label{symmetry}

In this section we observe that a symmetrizing trace on the Hecke algebra, compatible with parabolic subalgebras in a sense to be made precise below, allows us to compute (up to a non-zero multiple) the principal relative Schur elements appearing in our theorem. We write $P=\CC[\q_{H,\chi}^{\pm 1}]$ for the parameter ring of the Hecke algebra; in this context we use the symbol $\q$ for a formal parameter and reserve $q$ for its specialization to a number. In order to apply the results of this subsection to the Hecke algebra, we make the following hypothesis, which will allow us to compute the relative Schur elements in terms of a symmetrizing trace.

\begin{hypothesis} \label{hyp} The Hecke algebra $\mathcal{H}$ is a projective left $\mathcal{H}_S$-module and there exists an $P$-linear map $t:\mathcal{H} \longrightarrow P$ with the following properties:
\begin{itemize}
\item[(a)] $t(ab)=t(ba)$ for all $a,b \in \mathcal{H}$ and $a\mapsto (b \mapsto t(ab))$ defines an isomorphism of $\mathcal{H}$ onto $\mathrm{Hom}_P(\mathcal{H},P)$. 
\item[(b)] We have $\mathcal{H}=\mathcal{H}_S \oplus \mathcal{H}_S^\perp$, where $\mathcal{H}_S^\perp$ denotes the orthogonal complement with respect to the trace pairing $(a,b)=t(ab)$.
\end{itemize}
\end{hypothesis} For real reflection groups this is known thanks to classical facts about finite Coxeter groups. For the groups $G(r,1,n)$ the trace form was constructed in Theorem 2.8 of \cite{BrMa}. 

Let $R$ be a commutative ring and let $A$ be an $R$-algebra, free of finite rank as an $R$-module. Let $t:A \rightarrow R$ be an $R$-linear map with $t(ab)=t(ba)$ for all $a,b \in A$. For $a \in A$ define an $R$-linear form $\phi_a:A \rightarrow R$ by $\phi_a(b)=t(ab)$, and assume that $a \mapsto \phi_a$ is an isomorphism of $A$ onto $\mathrm{Hom}_R(A,R)$. Thus $A$ is a \emph{symmetric algebra}. 

Assume that we are given an $R$-subalgebra $B \subseteq A$ such that $A$ is a projective $B$-module, and moreover $A$ is the direct sum
$A=B \oplus  B^\perp$, where $$B^\perp=\{a \in A \ | \ t(ab)=0 \ \hbox{for all $b \in B$} \}.$$ Let $\pi:A \longrightarrow B$ be the projection for this direct sum decomposition. We note that since $1 \in B$ we have $t(B^\perp)=0$, and hence $t(\pi(a))=t(a)$ for all $a \in A$. The map $a \mapsto \psi_a$, where $\psi_a(a')=\pi(a'a)$, is an isomorphism of $A$-$B$-bimodules
$$A \longrightarrow \mathrm{Hom}_B(A,B).$$ 

Let $M$ be a $B$-module. Since $A$ is a projective $B$-module of finite rank, there is a natural isomorphism
$$\mathrm{Hom}_B(A,B) \otimes_B M \longrightarrow \mathrm{Hom}_B(A,M)$$ given by $\phi \otimes m \mapsto (a \mapsto \phi(a) m)$, and we obtain an isomorphism
$$\mathrm{Hom}_B(A,M) \cong \mathrm{Hom}_B(A,B) \otimes_B M \cong A \otimes_B M$$ between the coinduction $\mathrm{Hom}_B(A,M)$ and the induction $A \otimes_B M$. 

\begin{lemma}
Assume $R$ is an integral domain and that $M$ is a free $R$-module of rank one on which $A$ acts by a linear character $\chi:A \longrightarrow R$. Define $a_\chi \in A$ and $b_\chi \in B$ by the rules
$$\chi(a)=t(a a_\chi) \quad \text{and} \quad \chi(b)=t(b b_\chi) \quad \hbox{for all $a \in A$ and $b \in B$.}$$ Then $\chi(b_\chi)$ divides $\chi(a_\chi)$ in $R$ and the composite of the maps $$M \longrightarrow \mathrm{Hom}_B(A,M) \cong A \otimes_B M \longrightarrow M$$ is given by
$$m \longmapsto \frac{\chi(a_\chi)}{\chi(b_\chi)}m.$$
\end{lemma}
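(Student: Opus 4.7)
The plan is to compute the composite explicitly as a scalar using dual bases of $A$ over $B$, and then to identify that scalar with $\chi(a_\chi)/\chi(b_\chi)$ via a second, $R$-basis computation. Choose a (projective) dual basis $\{u_i\} \subset A$ and $\{v_i^*\} \subset \mathrm{Hom}_B(A,B)$ for the left $B$-module $A$, and let $v_i \in A$ be the preimages of $v_i^*$ under $\psi$, so that $\pi(u_j v_i) = \delta_{ij}$. A direct verification shows that the inverse of the isomorphism $A \otimes_B M \to \mathrm{Hom}_B(A,M)$ sends $\phi$ to $\sum_i v_i \otimes \phi(u_i)$. The unit $M \to \mathrm{Hom}_B(A,M)$, $m \mapsto (a \mapsto \chi(a)m)$, therefore pulls back to $\sum_i v_i \otimes \chi(u_i) m$, and applying the counit $a \otimes m \mapsto am = \chi(a) m$ yields
\[
m \longmapsto \sum_i \chi(u_i)\chi(v_i)\, m \;=\; \chi\Bigl(\sum_i u_i v_i\Bigr)\, m.
\]

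Next I would show $\chi\bigl(\sum_i u_i v_i\bigr) = \chi(a_\chi)/\chi(b_\chi)$. First, $t|_B$ is non-degenerate on $B$ (so $b_\chi$ is well-defined): if $b \in B$ satisfies $t(bb') = 0$ for all $b' \in B$, decomposing any $a \in A$ as $a = b' + c$ with $c \in B^\perp$ and using $B \cdot B^\perp \subseteq B^\perp$ forces $t(ba) = 0$, whence $b = 0$. Now choose an $R$-basis $\{x_k\}$ of $B$ with $t$-dual basis $\{y_k\}$ (so $t(x_k y_l) = \delta_{kl}$) and consider the $R$-basis $\{x_k u_i\}_{k,i}$ of $A$. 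The key observation is that its dual $R$-basis with respect to $t$ is $\{v_i y_k\}$:
\[
t(x_k u_i v_j y_l) = t(y_l x_k u_i v_j) = t\bigl(\pi(y_l x_k u_i v_j)\bigr) = t\bigl(y_l x_k \,\pi(u_i v_j)\bigr) = \delta_{ij}\,t(x_k y_l) = \delta_{ij}\delta_{kl},
\]
using cyclicity of $t$, the vanishing of $t$ on $B^\perp$, left $B$-linearity of $\pi$, and the two dual-basis relations. The standard formula for $a_\chi$ in terms of dual bases then gives $a_\chi = \sum_{i,k}\chi(x_k u_i)\, v_i y_k$, and multiplicativity of $\chi$ factors the resulting double sum:
\[
\chi(a_\chi) = \Bigl(\sum_k \chi(x_k)\chi(y_k)\Bigr)\Bigl(\sum_i \chi(u_i)\chi(v_i)\Bigr) = \chi(b_\chi)\,\chi\Bigl(\sum_i u_i v_i\Bigr).
\]
Since $\chi(\sum_i u_i v_i) \in R$, this simultaneously proves the divisibility statement and completes the identification of the composite.

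The main obstacle is the careful bookkeeping with the two $B$-module structures on $A$ and with the $(A,B)$-bimodule isomorphism $\psi$: one must verify the ordering in $\pi(u_j v_i) = \delta_{ij}$ and use left $B$-linearity $\pi(bx) = b\pi(x)$ at exactly the right place in the key identity. A secondary technical point is that $B$ is only guaranteed to be projective (not free) over $R$; this can be circumvented either by replacing the $R$-basis of $B$ by a projective dual basis throughout, or by passing to $\mathrm{Frac}(R)$ to prove the formula there and then using $R$-linearity together with $\chi(a_\chi), \chi(b_\chi) \in R$ to descend back to $R$.
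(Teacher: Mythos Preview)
Your argument is correct, but the paper takes a shorter, basis-free route. Rather than introducing dual bases, it observes that under the isomorphism $A\otimes_B M\cong\mathrm{Hom}_B(A,M)$ (sending $v\otimes m$ to $a\mapsto\pi(av)m$), the element $a_\chi\otimes m$ corresponds to the function $a\mapsto\pi(aa_\chi)m$. A one-line calculation using $t=t\circ\pi$ gives
\[
\chi(\pi(aa_\chi))=t(\pi(aa_\chi)b_\chi)=t(\pi(aa_\chi b_\chi))=t(aa_\chi b_\chi)=\chi(b_\chi a)=\chi(b_\chi)\chi(a),
\]
so this function is precisely $\chi(b_\chi)$ times the unit applied to $m$. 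Since the counit sends $a_\chi\otimes m$ to $\chi(a_\chi)m$, one gets $\chi(b_\chi)\Psi(m)=\chi(a_\chi)m$ for the full composite $\Psi$; freeness of $M$ then yields both the divisibility and the formula in one stroke, with no dual-basis bookkeeping and none of the freeness-versus-projectivity issues you flag. Your approach has the compensating virtue of making the link to the classical Gasch\"utz--Higman relative-trace formula explicit, and the factorisation $\chi(a_\chi)=\chi(b_\chi)\cdot\chi\bigl(\sum_i u_iv_i\bigr)$ is a pleasant structural identity in its own right. One small correction: the relation $\pi(u_jv_i)=\delta_{ij}$ you assert holds only for genuine free dual bases, not projective ones; your first computation actually only needs the projective identity $a=\sum_i\pi(av_i)u_i$, while your second computation does require freeness and hence the passage to $\mathrm{Frac}(R)$ that you propose.
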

\begin{proof}
From $t \circ \pi=t$ and using the fact that $\pi$ is a $B$-bimodule map, we obtain $$\chi(\pi(a a_\chi))=t(\pi(a a_\chi) b_\chi)=t(\pi(a a_\chi b_\chi))=t(a a_\chi b_\chi)=t(b_\chi a a_\chi)=\chi(b_\chi a)=\chi(b_\chi) \chi(a)$$  for all $a \in A$. It follows that if $m \in M$ is an element of $M$, and defining $\phi: a \mapsto \pi(a a_\chi)$, the element $\phi \otimes m$ corresponds, via the isomorphism $\mathrm{Hom}_B(A,B) \otimes_B M \cong \mathrm{Hom}_B(A,M)$, to the map $a \mapsto \chi(b_\chi) a m$. Writing $\psi$ for the unit-counit composition in the statement of the lemma we then have
$$\chi(b_\chi) \psi(m)=\chi(a_\chi) m \quad \hbox{for all $m \in M$.}$$ Since $M$ is free of rank one as an $R$ module, this implies the lemma.
\end{proof}

With the preceding hypotheses, we will refer to the element $\chi(a_\chi)$ as the \emph{Schur element} of the $A$-module $M$, and likewise $\chi(b_\chi)$ is the Schur element of $M$ regarded as a $B$-module.

\begin{corollary}
With the assumptions of the previous lemma, assume moreover that $m \subseteq R$ is a maximal ideal, and given an $R$-module $N$ write $N(m)=N/mN$ for its fiber at $m$. Then $M(m)$ is a summand of $\mathrm{Ind}^{A(m)}_{B(m)}(\mathrm{Res}^{A(m)}_{B(m)}(M(m)))$ if and only if $\chi(a_\chi) / \chi(b_\chi) \neq 0 \ \mathrm{mod} \ m$.
\end{corollary}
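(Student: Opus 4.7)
The plan is to descend the previous lemma to the residue field $k = R/m$ and then combine the resulting scalar with a one-dimensional Hom count coming from biadjointness. First I would verify that the data $(A, B, t, \pi)$ and the element $\chi(a_\chi)/\chi(b_\chi)$ of $R$ (well-defined by the divisibility supplied by the lemma) base-change to $k$ compatibly: since $A$ and $B$ are free over $R$ and the decomposition $A = B \oplus B^\perp$ splits as $R$-modules, tensoring with $k$ preserves the symmetric algebra hypotheses, the projection $\pi$, the character $\chi$, and the elements $a_\chi$ and $b_\chi$. Applying the preceding lemma to $(A(m), B(m), M(m))$ then shows that the unit-counit composition
$$M(m) \longrightarrow \mathrm{Ind}^{A(m)}_{B(m)}(\mathrm{Res}^{A(m)}_{B(m)}(M(m))) \longrightarrow M(m)$$
is multiplication by $\chi(a_\chi)/\chi(b_\chi) \bmod m$. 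The ``if'' direction of the corollary is now immediate: when this scalar is nonzero in $k$, the unit-counit composition is an automorphism of $M(m)$, so $M(m)$ is a summand of $\mathrm{Ind}(\mathrm{Res}(M(m)))$.

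For the converse, I would use that $M(m)$ is one-dimensional over $k$ and that $A(m)$ acts through $\chi \bmod m$, so $\mathrm{End}_{A(m)}(M(m)) = k$ and likewise $\mathrm{End}_{B(m)}(\mathrm{Res}(M(m))) = k$. The symmetric structure over $R$ descends to a symmetric structure over $k$, which supplies a biadjoint pair $(\mathrm{Ind}, \mathrm{Res})$ at the level of fibers. Taking both adjunctions gives
$$\mathrm{Hom}_{A(m)}(M(m), \mathrm{Ind}(\mathrm{Res}(M(m)))) \cong \mathrm{End}_{B(m)}(\mathrm{Res}(M(m))) \cong k,$$
and an analogous isomorphism in the opposite direction. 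Each of these one-dimensional Hom spaces is spanned by the unit, respectively the counit, of the adjunction. If $M(m)$ is a summand, the inclusion and projection witnessing this must be nonzero scalar multiples of the unit and counit, so their composition $\mathrm{id}_{M(m)}$ is a nonzero scalar multiple of the unit-counit composition; hence $\chi(a_\chi)/\chi(b_\chi) \not\equiv 0 \pmod m$.

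The only real obstacle is the bookkeeping in the base-change step: one must check that $\pi$, $a_\chi$, $b_\chi$, and the unit/counit of the biadjunction are compatible with $- \otimes_R k$. Because everything is built from $R$-linear maps between free $R$-modules and $1 \in B$ forces $t(B^\perp) = 0$ also modulo $m$, this is routine once the definitions are unpacked, and no further ideas are needed.
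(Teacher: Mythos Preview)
The paper states this corollary without proof, leaving it as an immediate consequence of the preceding lemma, so there is no argument in the paper to compare against. Your proof is correct and is precisely the expected one: base-change the unit--isomorphism--counit composite to the residue field $k=R/m$, obtaining multiplication by the image of $\chi(a_\chi)/\chi(b_\chi)$, and for the converse repeat verbatim the one-dimensional Hom-space argument already given in the proof of Corollary~\ref{rank one}.

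Two small remarks. First, you write that $B$ is free over $R$; strictly speaking the hypotheses only give $B$ as an $R$-module summand of the free module $A$, hence projective, but this is all you need for the base-change bookkeeping. Second, rather than re-applying the lemma over $k$ (which in principle asks you to make sense of $\bar\chi(\bar a_\chi)/\bar\chi(\bar b_\chi)$ when both numerator and denominator vanish), it is slightly cleaner to note that the entire composite over $R$ is an $R$-linear endomorphism of the free rank-one module $M$, hence multiplication by some $r\in R$ with $r\,\chi(b_\chi)=\chi(a_\chi)$; tensoring with $k$ then directly gives multiplication by $\bar r$. This avoids any division in $k$ and makes the ``if'' direction immediate.
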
 

\subsection{Coxeter groups} Suppose that $W \subseteq \mathrm{GL}(V)$ is a real reflection group; in this case we may choose a set $S$ of simple reflections such that $(W,S)$ is a Coxeter system, and the monodromy relations for the Hecke algebra are
$$(T_s-1)(T_s+q_s)=0 \quad \hbox{for $s \in S$,}$$ where we write $T_H=T_s$ and $q_s=q_{H,\chi}$ if $s$ is a  reflection about the hyperplane $H$ and $\chi$ is non-trivial. Given an expression $w=s_1 \cdots s_p$ of minimal length we write $T_w=T_{s_1} \cdots T_{s_p}$. It is a classical fact that the Hecke algebra $\mathcal{H}_c$ has $\CC$-basis $T_w$ for $w \in W$, and the formula
$$t\left(\sum a_w T_w\right)=a_1$$ defines a trace form. One checks that defining
$$\epsilon=\sum q_w^{-1} T_w \quad \hbox{with $q_w=q_{s_1} \cdots q_{s_p}$ if $w=s_1 \cdots s_p$ is minimal length}$$ gives $T_s \epsilon=\epsilon$ for all $s \in S$ and hence $T_w \epsilon=\epsilon$ for all $w \in W$. It follows that the principal Schur element is
$$s=\sum_{w \in W} q_w^{-1}=q_{w_0}^{-1} \sum_{w \in W} q_w.$$ Combining this formula with Theorem \ref{support theorem1} recovers Etingof's result \cite{Eti}.

\subsection{The infinite family $G(r,1,n)$} \label{gr1n} Up to conjugacy, the generators of the monodromy for the braid group of $G(r,1,n)$ may be listed as $T_0,T_1,\dots,T_n$, and fixing parameters $\q,Q_0,Q_1,\dots,Q_{r-1}$ the Hecke relations are then
$$\prod_{0 \leq j \leq r-1} (T_0-Q_j)=0 \quad \text{and} \quad (T_i+1)(T_i-\q)=0 \quad \hbox{for $1 \leq i \leq n$.}$$ By Theorem 3.2 of \cite{ChJa} up to an invertible factor the principal Schur element of the group $G(r,1,n)$ is
$$s=[n]! \prod_{j=1}^{r-1} \prod_{m=0}^{n-1} (\q^m Q_0 Q_j^{-1}-1),$$ where
$$[n]!=(1+\q) (1+\q+\q^2) \cdots (1+\q+\cdots+\q^{n-1})$$ is the usual $\q$-factorial. The maximal parabolic subgroups of $G(r,1,n)$ are, up to conjugacy, of the form $S_k \times G(r,1,n-k)$, for $1 \leq k \leq n$. The quotient of the corresponding Schur elements is the $\q$-index, which up to an invertible factor is
$$[G(r,1,n):S_k \times G(r,1,n-k)]_\q=s_1/s_2=\left[ \begin{matrix} n \\ k \end{matrix} \right] \prod_{j=1}^{r-1} \prod_{m=n-k}^{n-1} (\q^m Q_0 Q_j^{-1}-1),$$ where
$$ \left[ \begin{matrix} n \\ k \end{matrix} \right] =\frac{[n]!}{[k]! [n-k]!}$$ is the usual $\q$-binomial coefficient.

By using Theorem \ref{support theorem2}, and writing the parameters of \cite{ChJa} with the notation of \cite{GGJL}, subsection 5.1, so that
$$q=e^{-2 \pi i c_0} \quad \text{and} \quad Q_j=e^{2 \pi i (j-d_j)/r},$$ we find that $L_c(\triv)$ is finite dimensional if and only if either
\begin{itemize}
\item[(a)] there exist integers $1 \leq j \leq r-1$ and $k>0$ with $k$ congruent to $-j$ mod $r$ and
$$d_0-d_j+r(n-1) c_0=k,$$ or 
\item[(b)] $c=\ell/d$ for some positive divisor $d$ of $n$ and some positive integer $\ell$ coprime to $d$, and also
$$d_0-d_j+rmc_0=k$$ for $n-d \leq m \leq n-1$ and integers $1 \leq j \leq r-1$ and $k>0$ with $k$ congruent to $-j$ mod $r$.
\end{itemize} Thus for the infinite family one might guess that when it is finite dimensional the irreducible quotient $L_c(\triv)$ may be constructed explicitly as described in subsection 5.2 of \cite{GGJL}.

\subsection{Exceptional groups}

Here we give data, computed using the development version \cite{Mi} of the CHEVIE package of GAP3 \cite{Sch}, for exceptional groups in the form of tables listing the principal Schur elements (from which one can read off the set of singular parameters), and the $q$-indices of each conjugacy class of maximal parabolic subgroups. Both are given up to an invertible factor. Note that one can find the list of conjugacy classes parabolic subgroups of primitive complex reflection groups in Appendix C of \cite{OrTe}. To save space, we set $G_{de,e,n} := G(de,e,n)$ and $H_2 := I_2(5) = G_{5,5,2}$. Moreover, we denote by $Z_n$ the cyclic group $G_{n, 1, 1}$.

The parameters for the Hecke algebra are the $q_{H,\chi}$. If $H_1$, $H_2$, \dots, are representative of the orbits of reflection hyperplanes
(in the order given by the function \texttt{HyperplaneOrbits}), we write $x_j$, $1 \leq j \leq e_{H_1} - 1$ for the indeterminates $q_{H_1, \det_{H_1}^j}$, then $y_j$ for those associated to $H_2$, etc.

The Schur elements are available in CHEVIE. They were computed for exceptional Coxeter groups in \cite{Sur,Ben,Lus1,Lus2,AlLu}, and for exceptional complex reflection groups in \cite{Mal1,Mal2} (under some assumptions allowing us to apply the results of \ref{symmetry}), see those papers and also \cite{MaMi}). Chlouveraki noticed that, up to a scalar and a monomial, they can be written as a product of cyclotomic polynomials evaluated at primitive monomials (that is, whose exponents are coprime), and this factorization is unique up to inversion of the monomials \cite{Chl}. This provides a compact way to display them. We write $\zeta_n$ for $e^{2\pi i / n}$, and $\Phi_n$ for the $n$th cyclotomic polynomial over $\QQ$. Moreover, we need cyclotomic polynomials over some cyclotomic extensions of $\QQ$, with the following notation (following GAP3):

\[
\begin{array}{c|c}
\hline
\Phi'_n(q),\ n \in\{3,4,6\} & q - \zeta_n\\
\Phi''_n(q),\ n \in\{3,4,6\} & q - \zeta_n^{-1}\\
\hline
\Phi_{12}' (q)& ( q - \zeta_{12} )( q - \zeta_{12}^5 )\\
\Phi_{12}''(q) & ( q - \zeta_{12}^7 )( q - \zeta_{12}^{11} )\\
\Phi_{12}'''(q) & ( q - \zeta_{12} )( q - \zeta_{12}^7 )\\
\Phi_{12}''''(q) & ( q - \zeta_{12}^5 )( q - \zeta_{12}^{11} )\\
\hline
\Phi_{30}'(q) & ( q - \zeta_{30} )( q - \zeta_{30}^{11} )( q - \zeta_{30}^{19} )( q - \zeta_{30}^{29} )\\
\Phi_{30}''(q) & ( q - \zeta_{30}^7)( q - \zeta_{30}^{13} )( q - \zeta_{30}^{17} )( q - \zeta_{30}^{23} )\\
\Phi_{30}'''(q) & ( q - \zeta_{30} )( q - \zeta_{30}^7 )( q - \zeta_{30}^{13} )( q - \zeta_{30}^{19} )\\
\Phi_{30}''''(q) & ( q - \zeta_{30}^{11} )( q - \zeta_{30}^{17} )( q - \zeta_{30}^{23} )( q - \zeta_{30}^{29} )\\
\hline
\end{array}
\]

\bigskip

\begin{landscape}

%%%%%%%%%%%%%%%%%%%%%%%%%%%%%%%%%%%%%%%%%%%%%%%%%%%%%%%%%%%%%%%%%%%%%

\[
\begin{array}{|c|l|}
%\hline
%W & W_S & |W:W_S|_q \text{ (up to a scalar and a monomial)}\\
%\hline
\hline
|G_4|_q & \Phi_2{\Phi''_3}{\Phi''_6}(x_1)\Phi_2{\Phi'_3}{\Phi'_6}(x_2)\Phi_2(x_1x_2)\\
\hline
|G_4 : Z_3|_q & \Phi_2{\Phi''_6}(x_1)\Phi_2{\Phi'_6}(x_2)\Phi_2(x_1x_2)\\
\hline
\hline
|G_5|_q & {\Phi''_3}(x_1){\Phi'_3}(x_2){\Phi''_3}(y_1){\Phi'_3}(y_2){\Phi''_6}(x_1y_1)\Phi_2(x_1y_2)\Phi_2(x_2y_1){\Phi'_6}(x_2y_2)\Phi_2(x_1x_2y_1y_2)\\
\hline
|G_5 : Z_3'|_q
  & {\Phi''_3}(y_1){\Phi'_3}(y_2){\Phi''_6}(x_1y_1)\Phi_2(x_1y_2)\Phi_2(x_2y_1){\Phi'_6}(x_2y_2)\Phi_2(x_1x_2y_1y_2)\\
\hline
|G_5 : Z_3''|_q
  & {\Phi''_3}(x_1){\Phi'_3}(x_2){\Phi''_6}(x_1y_1)\Phi_2(x_1y_2)\Phi_2(x_2y_1){\Phi'_6}(x_2y_2)\Phi_2(x_1x_2y_1y_2)\\
\hline
\hline
|G_6|_q & \Phi_2(x_1){\Phi''_3}(y_1){\Phi'_3}(y_2)\Phi_2{\Phi''_6}(x_1y_1)\Phi_2{\Phi'_6}(x_1y_2)\Phi_2(x_1y_1y_2)\\
\hline
|G_6 : A_1|_q
& {\Phi''_3}(y_1){\Phi'_3}(y_2)\Phi_2{\Phi''_6}(x_1y_1)\Phi_2{\Phi'_6}(x_1y_2)\Phi_2(x_1y_1y_2)\\
\hline
|G_6 : Z_3|_q
  & \Phi_2(x_1)\Phi_2{\Phi''_6}(x_1y_1)\Phi_2{\Phi'_6}(x_1y_2)\Phi_2(x_1y_1y_2)\\
\hline
\hline
|G_7|_q 
& \Phi_2(x_1){\Phi''_3}(y_1){\Phi'_3}(y_2){\Phi''_3}(z_1){\Phi'_3}(z_2){\Phi''_6}(x_1y_1z_1)\Phi_2(x_1y_1z_2)\Phi_2(x_1y_2z_1){\Phi'_6}(x_1y_2z_2)\Phi_2(x_1y_1y_2z_1z_2)\\
\hline
|G_7 : A_1|_q
& {\Phi''_3}(y_1){\Phi'_3}(y_2){\Phi''_3}(z_1){\Phi'_3}(z_2){\Phi''_6}(x_1y_1z_1)\Phi_2(x_1y_1z_2)\Phi_2(x_1y_2z_1){\Phi'_6}(x_1y_2z_2)\Phi_2(x_1y_1y_2z_1z_2)\\
\hline
|G_7 : Z_3'|_q
& \Phi_2(x_1){\Phi''_3}(z_1){\Phi'_3}(z_2){\Phi''_6}(x_1y_1z_1)\Phi_2(x_1y_1z_2)\Phi_2(x_1y_2z_1){\Phi'_6}(x_1y_2z_2)\Phi_2(x_1y_1y_2z_1z_2)\\
\hline
|G_7 : Z_3''|_q
& \Phi_2(x_1){\Phi''_3}(y_1){\Phi'_3}(y_2){\Phi''_6}(x_1y_1z_1)\Phi_2(x_1y_1z_2)\Phi_2(x_1y_2z_1){\Phi'_6}(x_1y_2z_2)\Phi_2(x_1y_1y_2z_1z_2)\\
\hline
\hline
|G_8|_q
& {\Phi''_4}{\Phi''_{12}}(x_1)\Phi_2\Phi_3(x_2){\Phi'_4}{\Phi'_{12}}(x_3){\Phi''_4}(x_1x_2)\Phi_2(x_1x_3){\Phi'_4}(x_2x_3)\Phi_2(x_1x_2x_3)\\
\hline
|G_8 : Z_4|_q
& {\Phi''_{12}}(x_1)\Phi_3(x_2){\Phi'_{12}}(x_3){\Phi''_4}(x_1x_2)\Phi_2(x_1x_3){\Phi'_4}(x_2x_3)\Phi_2(x_1x_2x_3)\\
\hline
\hline
|G_9|_q
& \Phi_2(x_1){\Phi''_4}(y_1)\Phi_2(y_2){\Phi'_4}(y_3){\Phi''_{12}}(x_1y_1)\Phi_3(x_1y_2){\Phi'_{12}}(x_1y_3){\Phi''_4}(x_1y_1y_2)\Phi_2(x_1y_1y_3){\Phi'_4}(x_1y_2y_3)\Phi_2(x_1^2y_1y_2y_3)\\
\hline
|G_9 : A_1|_q
&  {\Phi''_4}(y_1)\Phi_2(y_2){\Phi'_4}(y_3){\Phi''_{12}}(x_1y_1)\Phi_3(x_1y_2){\Phi'_{12}}(x_1y_3){\Phi''_4}(x_1y_1y_2)\Phi_2(x_1y_1y_3){\Phi'_4}(x_1y_2y_3)\Phi_2(x_1^2y_1y_2y_3)\\
\hline
|G_9 : Z_4|_q
& \Phi_2(x_1){\Phi''_{12}}(x_1y_1)\Phi_3(x_1y_2){\Phi'_{12}}(x_1y_3){\Phi''_4}(x_1y_1y_2)\Phi_2(x_1y_1y_3){\Phi'_4}(x_1y_2y_3)\Phi_2(x_1^2y_1y_2y_3)\\
\hline
\hline
|G_{10}|_q
&\parbox{20cm}{\raggedright${
\Phi''_3}(x_1)\allowbreak
{\Phi'_3}(x_2)\allowbreak
{\Phi''_4}(y_1)\allowbreak
\Phi_2(y_2)\allowbreak
{\Phi'_4}(y_3)\allowbreak
(x_1y_1-\zeta_{12}^{11})\allowbreak
{\Phi''_3}(x_1y_2)\allowbreak
(x_1y_3-\zeta_{12}^5)\allowbreak
(x_2y_1\zeta_{12}^7)\allowbreak
{\Phi'_3}(x_2y_2)\allowbreak
(x_2y_3\zeta_{12})\allowbreak
{\Phi''_4}(x_1x_2y_1y_2)\allowbreak
\Phi_2(x_1x_2y_1y_3)\allowbreak
{\Phi'_4}(x_1x_2y_2y_3)\allowbreak
\Phi_2(x_1x_2y_1y_2y_3)$}\\
\hline
|G_{10} : Z_3|_q
&\text{\parbox{20cm}{\raggedright$
{\Phi''_3}(y_1)\allowbreak
{\Phi'_3}(y_2)\allowbreak
{\Phi''_4}(z_1)\allowbreak
\Phi_2(z_2)\allowbreak
{\Phi'_4}(z_3)\allowbreak
(x_1y_1z_1-\zeta_{12}^{11})\allowbreak
{\Phi''_3}(x_1y_1z_2)\allowbreak
(x_1y_1z_3-\zeta_{12}^5)\allowbreak
(x_1y_2z_1-\zeta_{12}^7)\allowbreak
{\Phi'_3}(x_1y_2z_2)\allowbreak
(x_1y_2z_3-\zeta_{12})\allowbreak
{\Phi''_4}(x_1y_1y_2z_1z_2)\allowbreak
\Phi_2(x_1y_1y_2z_1z_3)\allowbreak
{\Phi'_4}(x_1y_1y_2z_2z_3)
\allowbreak\Phi_2(x_1^2y_1y_2z_1z_2z_3)
$}}
\\
\hline
|G_{10} : Z_4|_q
&\text{\parbox{20cm}{\raggedright$\Phi_2(x_1)\allowbreak
{\Phi''_4}(z_1)\allowbreak
\Phi_2(z_2)\allowbreak
{\Phi'_4}(z_3)\allowbreak
(x_1y_1z_1-\zeta_{12}^{11})\allowbreak
{\Phi''_3}(x_1y_1z_2)\allowbreak
(x_1y_1z_3-\zeta_{12}^5)\allowbreak
(x_1y_2z_1-\zeta_{12}^7)\allowbreak
{\Phi'_3}(x_1y_2z_2)\allowbreak
(x_1y_2z_3-\zeta_{12})\allowbreak
{\Phi''_4}(x_1y_1y_2z_1z_2)\allowbreak
\Phi_2(x_1y_1y_2z_1z_3)\allowbreak
{\Phi'_4}(x_1y_1y_2z_2z_3)\allowbreak
\Phi_2(x_1^2y_1y_2z_1z_2z_3)$}}
\\
\hline
\end{array}
\]

\[
\begin{array}{|c|l|}
\hline
|G_{11}|_q
&\parbox{20cm}{\raggedright$
\Phi_2(x_1)\allowbreak
{\Phi''_3}(y_1)\allowbreak
{\Phi'_3}(y_2)\allowbreak
{\Phi''_4}(z_1)\allowbreak
\Phi_2(z_2)\allowbreak
{\Phi'_4}(z_3)\allowbreak
(x_1y_1z_1-\zeta_{12}^{11})\allowbreak
{\Phi''_3}(x_1y_1z_2)\allowbreak
(x_1y_1z_3-\zeta_{12}^5)\allowbreak
(x_1y_2z_1-\zeta_{12}^7)\allowbreak
{\Phi'_3}(x_1y_2z_2)\allowbreak
(x_1y_2z_3-\zeta_{12})\allowbreak
{\Phi''_4}(x_1y_1y_2z_1z_2)\allowbreak
\Phi_2(x_1y_1y_2z_1z_3)\allowbreak
{\Phi'_4}(x_1y_1y_2z_2z_3)\allowbreak
\Phi_2(x_1^2y_1y_2z_1z_2z_3)
$}\\
\hline
|G_{11} : A_1|_q
& \text{ \parbox{20cm}{\raggedright$
{\Phi''_3}(y_1)\allowbreak
{\Phi'_3}(y_2)\allowbreak
{\Phi''_4}(z_1)\allowbreak
\Phi_2(z_2)\allowbreak
{\Phi'_4}(z_3)\allowbreak
(x_1y_1z_1-\zeta_{12}^{11})\allowbreak
{\Phi''_3}(x_1y_1z_2)\allowbreak
(x_1y_1z_3-\zeta_{12}^5) \allowbreak
(x_1y_2z_1-\zeta_{12}^7) \allowbreak
{\Phi'_3}(x_1y_2z_2) \allowbreak
(x_1y_2z_3-\zeta_{12})\allowbreak
{\Phi''_4}(x_1y_1y_2z_1z_2)\allowbreak
\Phi_2(x_1y_1y_2z_1z_3)\allowbreak
{\Phi'_4}(x_1y_1y_2z_2z_3)\allowbreak
\Phi_2(x_1^2y_1y_2z_1z_2z_3)
$}}
\\
\hline
|G_{11} : Z_3|_q
&\text{\parbox{20cm}{\raggedright$
\Phi_2(x_1)\allowbreak
{\Phi''_4}(z_1)\allowbreak
\Phi_2(z_2)\allowbreak
{\Phi'_4}(z_3)\allowbreak
(x_1y_1z_1-\zeta_{12}^{11})\allowbreak
{\Phi''_3}(x_1y_1z_2)\allowbreak
(x_1y_1z_3-\zeta_{12}^5)\allowbreak
(x_1y_2z_1-\zeta_{12}^7)\allowbreak
{\Phi'_3}(x_1y_2z_2)\allowbreak
(x_1y_2z_3-\zeta_{12})\allowbreak
{\Phi''_4}(x_1y_1y_2z_1z_2)\allowbreak
\Phi_2(x_1y_1y_2z_1z_3)\allowbreak
{\Phi'_4}(x_1y_1y_2z_2z_3)\allowbreak
\Phi_2(x_1^2y_1y_2z_1z_2z_3)$}}
\\
\hline
|G_{11} : Z_4|_q
&\text{\parbox{20cm}{\raggedright$
\Phi_2(x_1)\allowbreak
{\Phi''_3}(y_1)\allowbreak
{\Phi'_3}(y_2)\allowbreak
(x_1y_1z_1-\zeta_{12}^{11})\allowbreak
{\Phi''_3}(x_1y_1z_2)\allowbreak
(x_1y_1z_3-\zeta_{12}^5)\allowbreak
(x_1y_2z_1-\zeta_{12}^7)\allowbreak
{\Phi'_3}(x_1y_2z_2)\allowbreak
(x_1y_2z_3-\zeta_{12})\allowbreak
{\Phi''_4}(x_1y_1y_2z_1z_2)\allowbreak
\Phi_2(x_1y_1y_2z_1z_3)\allowbreak
{\Phi'_4}(x_1y_1y_2z_2z_3)\allowbreak
\Phi_2(x_1^2y_1y_2z_1z_2z_3)$}}
\\
\hline
\hline
|G_{12}|_q & \Phi_2^2\Phi_3\Phi_4^2\Phi_{12}(x_1)\\
\hline
|G_{12} : A_1|_q
& \Phi_2\Phi_3\Phi_4^2\Phi_{12}(x_1)\\
\hline
\hline
|G_{13}|_q & \Phi_2(x_1)\Phi_2\Phi_3(y_1)\Phi_2(x_1y_1)\Phi_2^2\Phi_6(x_1y_1^2)\\
\hline
|G_{13} : A_1'|_q
& \Phi_2(x_1)\Phi_3(y_1)\Phi_2(x_1y_1)\Phi_2^2\Phi_6(x_1y_1^2)\\
\hline
|G_{13} : A_1''|_q
& \Phi_2\Phi_3(y_1)\Phi_2(x_1y_1)\Phi_2^2\Phi_6(x_1y_1^2)\\
\hline
\hline
|G_{14}|_q
&\Phi_2(x_1){\Phi''_3}(y_1){\Phi'_3}(y_2){\Phi''_3}{\Phi''''_{12}}(x_1y_1){\Phi'_3}{\Phi'''_{12}}(x_1y_2)\Phi_2\Phi_4(x_1y_1y_2)\Phi_2(x_1^2y_1y_2)\\
\hline
|G_{14} : A_1|_q
& {\Phi''_3}(y_1){\Phi'_3}(y_2){\Phi''_3}{\Phi''''_{12}}(x_1y_1){\Phi'_3}{\Phi'''_{12}}(x_1y_2)\Phi_2\Phi_4(x_1y_1y_2)\Phi_2(x_1^2y_1y_2)
\\
\hline
|G_{14} : Z_3|_q
& \Phi_2(x_1){\Phi''_3}{\Phi''''_{12}}(x_1y_1){\Phi'_3}{\Phi'''_{12}}(x_1y_2)\Phi_2\Phi_4(x_1y_1y_2)\Phi_2(x_1^2y_1y_2)\\
\hline
\hline
|G_{15}|_q
& \Phi_2(x_1){\Phi''_3}(y_1){\Phi'_3}(y_2)\Phi_2(z_1){\Phi''_3}(x_1y_1){\Phi'_3}(x_1y_2)\Phi_2(x_1y_1y_2z_1)\Phi_2(x_1^2y_1y_2z_1){\Phi''_6}(x_1^2y_1^2z_1){\Phi'_6}(x_1^2y_2^2z_1)\Phi_2(x_1^2y_1^2y_2^2z_1)\\
\hline
|G_{15} : A_1'|_q
  & {\Phi''_3}(y_1){\Phi'_3}(y_2)\Phi_2(z_1){\Phi''_3}(x_1y_1){\Phi'_3}(x_1y_2)\Phi_2(x_1y_1y_2z_1)\Phi_2(x_1^2y_1y_2z_1){\Phi''_6}(x_1^2y_1^2z_1){\Phi'_6}(x_1^2y_2^2z_1)\Phi_2(x_1^2y_1^2y_2^2z_1)\\
\hline
|G_{15} : Z_3|_q
  & \Phi_2(x_1)\Phi_2(z_1){\Phi''_3}(x_1y_1){\Phi'_3}(x_1y_2)\Phi_2(x_1y_1y_2z_1)\Phi_2(x_1^2y_1y_2z_1){\Phi''_6}(x_1^2y_1^2z_1){\Phi'_6}(x_1^2y_2^2z_1)\Phi_2(x_1^2y_1^2y_2^2z_1)
\\
\hline
|G_{15} : A_1''|_q
& \Phi_2(x_1){\Phi''_3}(y_1){\Phi'_3}(y_2){\Phi''_3}(x_1y_1){\Phi'_3}(x_1y_2)\Phi_2(x_1y_1y_2z_1)\Phi_2(x_1^2y_1y_2z_1){\Phi''_6}(x_1^2y_1^2z_1){\Phi'_6}(x_1^2y_2^2z_1)\Phi_2(x_1^2y_1^2y_2^2z_1)\\
\hline
\hline
|G_{16}|_q 
&\parbox{20cm}{$
(x_1-\zeta_5^4)(x_1+\zeta_{15}^2)(x_1+\zeta_{15}^7)
(x_2-\zeta_5^3)(x_2+\zeta_{15}^{14})(x_2+\zeta_{15}^4)
(x_3-\zeta_5^2)(x_3+\zeta_{15}^{11})(x_3+\zeta_{15})
(x_4-\zeta_5)(x_4+\zeta_{15}^8)(x_4+\zeta_{15}^{13})
(x_1x_2+\zeta_5^2)
(x_1x_3+\zeta_5)\allowbreak
\Phi_2(x_1x_4)\allowbreak
\Phi_2(x_2x_3)\allowbreak
(x_2x_4+\zeta_5^4)\allowbreak
(x_3x_4+\zeta_5^3)\allowbreak
(x_1x_2x_3-\zeta_5^4)\allowbreak
(x_1x_2x_4-\zeta_5^3)\allowbreak
(x_1x_3x_4-\zeta_5^2)\allowbreak
(x_2x_3x_4-\zeta_5)\allowbreak
\Phi_2\Phi_3(x_1x_2x_3x_4)$}\\
\hline
|G_{16}:Z_5|_q
&\text{ \parbox{20cm}{\raggedright$
(x_1+\zeta_{15}^2)(x_1+\zeta_{15}^7)\allowbreak
(x_2+\zeta_{15}^{14})(x_2+\zeta_{15}^4)\allowbreak
(x_3+\zeta_{15}^{11})(x_3+\zeta_{15})\allowbreak
(x_4+\zeta_{15}^8)(x_4+\zeta_{15}^{13})\allowbreak
(x_1x_2+\zeta_5^2)\allowbreak
(x_1x_3+\zeta_5)\allowbreak
(x_2x_4+\zeta_5^4)\allowbreak
(x_3x_4+\zeta_5^3)\allowbreak
(x_1x_2x_3-\zeta_5^4)\allowbreak
(x_1x_2x_4-\zeta_5^3)\allowbreak
(x_1x_3x_4-\zeta_5^2)\allowbreak
(x_2x_3x_4-\zeta_5)\allowbreak
\Phi_2(x_1x_4)\allowbreak
\Phi_2(x_2x_3)\allowbreak
\Phi_2\Phi_3(x_1x_2x_3x_4)
$}}
\\
\hline
\hline
|G_{17}|_q
& \parbox{20cm}{$
\Phi_2(x_1)\allowbreak
(y_1-\zeta_5^4)\allowbreak
(y_2-\zeta_5^3)\allowbreak
(y_3-\zeta_5^2)\allowbreak
(y_4-\zeta_5)\allowbreak
(x_1y_1+\zeta_{15}^2)(x_1y_1+\zeta_{15}^7)\allowbreak
(x_1y_2+\zeta_{15}^{14})(x_1y_2+\zeta_{15}^4)\allowbreak
(x_1y_3+\zeta_{15}^{11})(x_1y_3+\zeta_{15})\allowbreak
(x_1y_4+\zeta_{15}^8)(x_1y_4+\zeta_{15}^{13})\allowbreak
(x_1y_1y_2+\zeta_5^2)\allowbreak
(x_1y_1y_3+\zeta_5)\allowbreak
\Phi_2(x_1y_1y_4)\allowbreak
\Phi_2(x_1y_2y_3)\allowbreak
(x_1y_2y_4+\zeta_5^4)\allowbreak
(x_1y_3y_4+\zeta_5^3)\allowbreak
(x_1^2y_1y_2y_3-\zeta_5^4)\allowbreak
(x_1^2y_1y_2y_4-\zeta_5^3)\allowbreak
(x_1^2y_1y_3y_4-\zeta_5^2)\allowbreak
(x_1^2y_2y_3y_4-\zeta_5)\allowbreak
\Phi_3(x_1^2y_1y_2y_3y_4)\allowbreak
\Phi_2(x_1^3y_1y_2y_3y_4)\allowbreak
$}\\
\hline
|G_{17}:A_1|_q
&\text{ \parbox{20cm}{\raggedright$
(y_1-\zeta_5^4)\allowbreak
(y_2-\zeta_5^3)\allowbreak
(y_3-\zeta_5^2)\allowbreak
(y_4-\zeta_5)\allowbreak
(x_1y_1+\zeta_{15}^2)(x_1y_1+\zeta_{15}^7)\allowbreak
(x_1y_2+\zeta_{15}^{14})(x_1y_2+\zeta_{15}^4)\allowbreak
(x_1y_3+\zeta_{15}^{11})(x_1y_3+\zeta_{15})\allowbreak
(x_1y_4+\zeta_{15}^8)(x_1y_4+\zeta_{15}^{13})\allowbreak
(x_1y_1y_2+\zeta_5^2)\allowbreak
(x_1y_1y_3+\zeta_5)\allowbreak
\Phi_2(x_1y_1y_4)\allowbreak
\Phi_2(x_1y_2y_3)\allowbreak
(x_1y_2y_4+\zeta_5^4)\allowbreak
(x_1y_3y_4+\zeta_5^3)\allowbreak
(x_1^2y_1y_2y_3-\zeta_5^4)\allowbreak
(x_1^2y_1y_2y_4-\zeta_5^3)\allowbreak
(x_1^2y_1y_3y_4-\zeta_5^2)\allowbreak
(x_1^2y_2y_3y_4-\zeta_5)\allowbreak
\Phi_3(x_1^2y_1y_2y_3y_4)\allowbreak
\Phi_2(x_1^3y_1y_2y_3y_4)
$}}
\\
\hline
|G_{17}:Z_5|_q
&
\text{ \parbox{20cm}{\raggedright$
\Phi_2(x_1)\allowbreak
(x_1y_1+\zeta_{15}^2)\allowbreak
(x_1y_1+\zeta_{15}^7)\allowbreak
(x_1y_2+\zeta_{15}^{14})\allowbreak
(x_1y_2+\zeta_{15}^4)\allowbreak
(x_1y_3+\zeta_{15}^{11})\allowbreak
(x_1y_3+\zeta_{15})\allowbreak
(x_1y_4+\zeta_{15}^8)\allowbreak
(x_1y_4+\zeta_{15}^{13})\allowbreak
(x_1y_1y_2+\zeta_5^2)\allowbreak
(x_1y_1y_3+\zeta_5)\allowbreak
\Phi_2(x_1y_1y_4)\allowbreak
\Phi_2(x_1y_2y_3)\allowbreak
(x_1y_2y_4+\zeta_5^4)\allowbreak
(x_1y_3y_4+\zeta_5^3)\allowbreak
(x_1^2y_1y_2y_3-\zeta_5^4)\allowbreak
(x_1^2y_1y_2y_4-\zeta_5^3)\allowbreak
(x_1^2y_1y_3y_4-\zeta_5^2)\allowbreak
(x_1^2y_2y_3y_4-\zeta_5)\allowbreak
\Phi_3(x_1^2y_1y_2y_3y_4)\allowbreak
\Phi_2(x_1^3y_1y_2y_3y_4)
$}}
\\
\hline
\end{array}
\]

\[
\begin{array}{|c|l|}
\hline
|G_{18}|_q
& \parbox{20cm}{$
{\Phi''_3}(x_1)\allowbreak
{\Phi'_3}(x_2)\allowbreak
(y_1-\zeta_5^4)\allowbreak
(y_2-\zeta_5^3)\allowbreak
(y_3-\zeta_5^2)\allowbreak
(y_4-\zeta_5)\allowbreak
(x_1y_1+\zeta_{15}^7)\allowbreak
(x_1y_2+\zeta_{15}^4)\allowbreak
(x_1y_3+\zeta_{15})\allowbreak
(x_1y_4+\zeta_{15}^{13})\allowbreak
(x_2y_1+\zeta_{15}^2)\allowbreak
(x_2y_2+\zeta_{15}^{14})\allowbreak
(x_2y_3+\zeta_{15}^{11})\allowbreak
(x_2y_4+\zeta_{15}^8)\allowbreak
(x_1x_2y_1y_2+\zeta_5^2)\allowbreak
(x_1x_2y_1y_3+\zeta_5)\allowbreak
\Phi_2(x_1x_2y_1y_4)\allowbreak
\Phi_2(x_1x_2y_2y_3)\allowbreak
(x_1x_2y_2y_4+\zeta_5^4)\allowbreak
(x_1x_2y_3y_4+\zeta_5^3)\allowbreak
(x_1x_2y_1y_2y_3-\zeta_5^4)\allowbreak
(x_1x_2y_1y_2y_4-\zeta_5^3)\allowbreak
(x_1x_2y_1y_3y_4-\zeta_5^2)\allowbreak
(x_1x_2y_2y_3y_4-\zeta_5)\allowbreak
{\Phi'_3}(x_1x_2^2y_1y_2y_3y_4)\allowbreak
{\Phi''_3}(x_1^2x_2y_1y_2y_3y_4)\allowbreak
\Phi_2(x_1^2x_2^2y_1y_2y_3y_4)\allowbreak
$}\\
\hline
|G_{18}:Z_3|_q
&
\text{ \parbox{20cm}{\raggedright$
(y_1-\zeta_5^4)\allowbreak
(y_2-\zeta_5^3)\allowbreak
(y_3-\zeta_5^2)\allowbreak
(y_4-\zeta_5)\allowbreak
(x_1y_1+\zeta_{15}^7)\allowbreak
(x_1y_2+\zeta_{15}^4)\allowbreak
(x_1y_3+\zeta_{15})\allowbreak
(x_1y_4+\zeta_{15}^{13})\allowbreak
(x_2y_1+\zeta_{15}^2)\allowbreak
(x_2y_2+\zeta_{15}^{14})\allowbreak
(x_2y_3+\zeta_{15}^{11})\allowbreak
(x_2y_4+\zeta_{15}^8)\allowbreak
(x_1x_2y_1y_2+\zeta_5^2)\allowbreak
(x_1x_2y_1y_3+\zeta_5)\allowbreak
\Phi_2(x_1x_2y_1y_4)\allowbreak
\Phi_2(x_1x_2y_2y_3)\allowbreak
(x_1x_2y_2y_4+\zeta_5^4)\allowbreak
(x_1x_2y_3y_4+\zeta_5^3)\allowbreak
(x_1x_2y_1y_2y_3-\zeta_5^4)\allowbreak
(x_1x_2y_1y_2y_4-\zeta_5^3)\allowbreak
(x_1x_2y_1y_3y_4-\zeta_5^2)\allowbreak
(x_1x_2y_2y_3y_4-\zeta_5)\allowbreak
{\Phi'_3}(x_1x_2^2y_1y_2y_3y_4)\allowbreak
{\Phi''_3}(x_1^2x_2y_1y_2y_3y_4)\allowbreak
\Phi_2(x_1^2x_2^2y_1y_2y_3y_4)
$}}
\\
\hline
|G_{18}:Z_5|_q
&
\text{ \parbox{20cm}{\raggedright$
{\Phi''_3}(x_1)\allowbreak
{\Phi'_3}(x_2)\allowbreak
(x_1y_1+\zeta_{15}^7)\allowbreak
(x_1y_2+\zeta_{15}^4)\allowbreak
(x_1y_3+\zeta_{15})\allowbreak
(x_1y_4+\zeta_{15}^{13})\allowbreak
(x_2y_1+\zeta_{15}^2)\allowbreak
(x_2y_2+\zeta_{15}^{14})\allowbreak
(x_2y_3+\zeta_{15}^{11})\allowbreak
(x_2y_4+\zeta_{15}^8)\allowbreak
(x_1x_2y_1y_2+\zeta_5^2)\allowbreak
(x_1x_2y_1y_3+\zeta_5)\allowbreak
\Phi_2(x_1x_2y_1y_4)\allowbreak
\Phi_2(x_1x_2y_2y_3)\allowbreak
(x_1x_2y_2y_4+\zeta_5^4)\allowbreak
(x_1x_2y_3y_4+\zeta_5^3)\allowbreak
(x_1x_2y_1y_2y_3-\zeta_5^4)\allowbreak
(x_1x_2y_1y_2y_4-\zeta_5^3)\allowbreak
(x_1x_2y_1y_3y_4-\zeta_5^2)\allowbreak
(x_1x_2y_2y_3y_4-\zeta_5)\allowbreak
{\Phi'_3}(x_1x_2^2y_1y_2y_3y_4)\allowbreak
{\Phi''_3}(x_1^2x_2y_1y_2y_3y_4)\allowbreak
\Phi_2(x_1^2x_2^2y_1y_2y_3y_4)
$}}
\\
\hline
\hline
|G_{19}|_q
&\text{ \parbox{20cm}{\raggedright$
\Phi_2(x_1)\allowbreak
{\Phi''_3}(y_1)\allowbreak
{\Phi'_3}(y_2)\allowbreak
(z_1-\zeta_5^4)\allowbreak
(z_2-\zeta_5^3)\allowbreak
(z_3-\zeta_5^2)\allowbreak
(z_4-\zeta_5)\allowbreak
(x_1y_1z_1+\zeta_{15}^7)\allowbreak
(x_1y_1z_2+\zeta_{15}^4)\allowbreak
(x_1y_1z_3+\zeta_{15})\allowbreak
(x_1y_1z_4+\zeta_{15}^{13})\allowbreak
(x_1y_2z_1+\zeta_{15}^2)\allowbreak
(x_1y_2z_2+\zeta_{15}^{14})\allowbreak
(x_1y_2z_3+\zeta_{15}^{11})\allowbreak
(x_1y_2z_4+\zeta_{15}^8)\allowbreak
(x_1y_1y_2z_1z_2+\zeta_5^2)\allowbreak
(x_1y_1y_2z_1z_3+\zeta_5)\allowbreak
\Phi_2(x_1y_1y_2z_1z_4)\allowbreak
\Phi_2(x_1y_1y_2z_2z_3)\allowbreak
(x_1y_1y_2z_2z_4+\zeta_5^4)\allowbreak
(x_1y_1y_2z_3z_4+\zeta_5^3)\allowbreak
(x_1^2y_1y_2z_1z_2z_3-\zeta_5^4)\allowbreak
(x_1^2y_1y_2z_1z_2z_4-\zeta_5^3)\allowbreak
(x_1^2y_1y_2z_1z_3z_4-\zeta_5^2)\allowbreak
(x_1^2y_1y_2z_2z_3z_4-\zeta_5)\allowbreak
{\Phi'_3}(x_1^2y_1y_2^2z_1z_2z_3z_4)\allowbreak
{\Phi''_3}(x_1^2y_1^2y_2z_1z_2z_3z_4)\allowbreak
\Phi_2(x_1^3y_1^2y_2^2z_1z_2z_3z_4)
$}}\\
\hline
|G_{19}:A_1|_q
&\text{ \parbox{20cm}{\raggedright$
{\Phi''_3}(y_1)\allowbreak
{\Phi'_3}(y_2)\allowbreak
(z_1-\zeta_5^4)\allowbreak
(z_2-\zeta_5^3)\allowbreak
(z_3-\zeta_5^2)\allowbreak
(z_4-\zeta_5)\allowbreak
(x_1y_1z_1+\zeta_{15}^7)\allowbreak
(x_1y_1z_2+\zeta_{15}^4)\allowbreak
(x_1y_1z_3+\zeta_{15})\allowbreak
(x_1y_1z_4+\zeta_{15}^{13})\allowbreak
(x_1y_2z_1+\zeta_{15}^2)\allowbreak
(x_1y_2z_2+\zeta_{15}^{14})\allowbreak
(x_1y_2z_3+\zeta_{15}^{11})\allowbreak
(x_1y_2z_4+\zeta_{15}^8)\allowbreak
(x_1y_1y_2z_1z_2+\zeta_5^2)\allowbreak
(x_1y_1y_2z_1z_3+\zeta_5)\allowbreak
\Phi_2(x_1y_1y_2z_1z_4)\allowbreak
\Phi_2(x_1y_1y_2z_2z_3)\allowbreak
(x_1y_1y_2z_2z_4+\zeta_5^4)\allowbreak
(x_1y_1y_2z_3z_4+\zeta_5^3)\allowbreak
(x_1^2y_1y_2z_1z_2z_3-\zeta_5^4)\allowbreak
(x_1^2y_1y_2z_1z_2z_4-\zeta_5^3)\allowbreak
(x_1^2y_1y_2z_1z_3z_4-\zeta_5^2)\allowbreak
(x_1^2y_1y_2z_2z_3z_4-\zeta_5)\allowbreak
{\Phi'_3}(x_1^2y_1y_2^2z_1z_2z_3z_4)\allowbreak
{\Phi''_3}(x_1^2y_1^2y_2z_1z_2z_3z_4)\allowbreak
\Phi_2(x_1^3y_1^2y_2^2z_1z_2z_3z_4)
$}}
\\
\hline
|G_{19}:Z_3|_q
&
\text{ \parbox{20cm}{\raggedright$
\Phi_2(x_1)\allowbreak
(z_1-\zeta_5^4)\allowbreak
(z_2-\zeta_5^3)\allowbreak
(z_3-\zeta_5^2)\allowbreak
(z_4-\zeta_5)\allowbreak
(x_1y_1z_1+\zeta_{15}^7)\allowbreak
(x_1y_1z_2+\zeta_{15}^4)\allowbreak
(x_1y_1z_3+\zeta_{15})\allowbreak
(x_1y_1z_4+\zeta_{15}^{13})\allowbreak
(x_1y_2z_1+\zeta_{15}^2)\allowbreak
(x_1y_2z_2+\zeta_{15}^{14})\allowbreak
(x_1y_2z_3+\zeta_{15}^{11})\allowbreak
(x_1y_2z_4+\zeta_{15}^8)\allowbreak
(x_1y_1y_2z_1z_2+\zeta_5^2)\allowbreak
(x_1y_1y_2z_1z_3+\zeta_5)\allowbreak
\Phi_2(x_1y_1y_2z_1z_4)\allowbreak
\Phi_2(x_1y_1y_2z_2z_3)\allowbreak
(x_1y_1y_2z_2z_4+\zeta_5^4)\allowbreak
(x_1y_1y_2z_3z_4+\zeta_5^3)\allowbreak
(x_1^2y_1y_2z_1z_2z_3-\zeta_5^4)\allowbreak
(x_1^2y_1y_2z_1z_2z_4-\zeta_5^3)\allowbreak
(x_1^2y_1y_2z_1z_3z_4-\zeta_5^2)\allowbreak
(x_1^2y_1y_2z_2z_3z_4-\zeta_5)\allowbreak
{\Phi'_3}(x_1^2y_1y_2^2z_1z_2z_3z_4)\allowbreak
{\Phi''_3}(x_1^2y_1^2y_2z_1z_2z_3z_4)\allowbreak
\Phi_2(x_1^3y_1^2y_2^2z_1z_2z_3z_4)
$}}
\\
\hline
|G_{19}:Z_5|_q
&\text{ \parbox{20cm}{\raggedright$
\Phi_2(x_1)\allowbreak
{\Phi''_3}(y_1)\allowbreak
{\Phi'_3}(y_2)\allowbreak
(x_1y_1z_1+\zeta_{15}^7)\allowbreak
(x_1y_1z_2+\zeta_{15}^4)\allowbreak
(x_1y_1z_3+\zeta_{15})\allowbreak
(x_1y_1z_4+\zeta_{15}^{13})\allowbreak
(x_1y_2z_1+\zeta_{15}^2)\allowbreak
(x_1y_2z_2+\zeta_{15}^{14})\allowbreak
(x_1y_2z_3+\zeta_{15}^{11})\allowbreak
(x_1y_2z_4+\zeta_{15}^8)\allowbreak
(x_1y_1y_2z_1z_2+\zeta_5^2)\allowbreak
(x_1y_1y_2z_1z_3+\zeta_5)\allowbreak
\Phi_2(x_1y_1y_2z_1z_4)\allowbreak
\Phi_2(x_1y_1y_2z_2z_3)\allowbreak
(x_1y_1y_2z_2z_4+\zeta_5^4)\allowbreak
(x_1y_1y_2z_3z_4+\zeta_5^3)\allowbreak
(x_1^2y_1y_2z_1z_2z_3-\zeta_5^4)\allowbreak
(x_1^2y_1y_2z_1z_2z_4-\zeta_5^3)\allowbreak
(x_1^2y_1y_2z_1z_3z_4-\zeta_5^2)\allowbreak
(x_1^2y_1y_2z_2z_3z_4-\zeta_5)\allowbreak
{\Phi'_3}(x_1^2y_1y_2^2z_1z_2z_3z_4)\allowbreak
{\Phi''_3}(x_1^2y_1^2y_2z_1z_2z_3z_4)\allowbreak
\Phi_2(x_1^3y_1^2y_2^2z_1z_2z_3z_4)
$}}
\\
\hline
\end{array}
\]

\[
\begin{array}{|c|l|}
\hline
|G_{20}|_q
&{\Phi''_3}{\Phi''''_{30}}(x_1){\Phi'_3}{\Phi'''_{30}}(x_2)\Phi_2^2\Phi_4\Phi_5\Phi_{10}(x_1x_2){\Phi'_3}(x_1x_2^2){\Phi''_3}(x_1^2x_2)\\
\hline
|G_{20}:Z_3|_q
& {\Phi''''_{30}}(x_1){\Phi'''_{30}}(x_2)\Phi_2^2\Phi_4\Phi_5\Phi_{10}(x_1x_2){\Phi'_3}(x_1x_2^2){\Phi''_3}(x_1^2x_2)\\
\hline
\hline
|G_{21}|_q&
\Phi_2(x_1){\Phi''_3}(y_1){\Phi'_3}(y_2){\Phi''''_{30}}(x_1y_1){\Phi'''_{30}}(x_1y_2)\Phi_2^2\Phi_{10}(x_1y_1y_2)\Phi_5(x_1^2y_1y_2){\Phi'_3}(x_1^2y_1y_2^2){\Phi''_3}(x_1^2y_1^2y_2)\Phi_2(x_1^3y_1^2y_2^2)\\
\hline
|G_{21} : A_1|_q
& {\Phi''_3}(y_1){\Phi'_3}(y_2){\Phi''''_{30}}(x_1y_1){\Phi'''_{30}}(x_1y_2)\Phi_2^2\Phi_{10}(x_1y_1y_2)\Phi_5(x_1^2y_1y_2){\Phi'_3}(x_1^2y_1y_2^2){\Phi''_3}(x_1^2y_1^2y_2)\Phi_2(x_1^3y_1^2y_2^2)\\
\hline
|G_{21} : Z_3|_q
& \Phi_2(x_1){\Phi''''_{30}}(x_1y_1){\Phi'''_{30}}(x_1y_2)\Phi_2^2\Phi_{10}(x_1y_1y_2)\Phi_5(x_1^2y_1y_2){\Phi'_3}(x_1^2y_1y_2^2){\Phi''_3}(x_1^2y_1^2y_2)\Phi_2(x_1^3y_1^2y_2^2)\\
\hline
\hline
|G_{22}|_q
& \Phi_2^4\Phi_3\Phi_5\Phi_6^2\Phi_{10}^2\Phi_{30}(x_1)\\
\hline
|G_{22} : A_1|_q
& \Phi_2^3\Phi_3\Phi_5\Phi_6^2\Phi_{10}^2\Phi_{30}(x_1)\\
\hline
|G_{23}|_q = |H_3|_q&\Phi_2^3\Phi_3\Phi_5\Phi_6\Phi_{10}(x_1)\\
\hline
|G_{23} : H_2|_q & \Phi_2^2\Phi_3\Phi_6\Phi_{10}(x_1)\\
\hline
|G_{23} : A_1^2|_q & \Phi_2\Phi_3\Phi_5\Phi_6\Phi_{10}(x_1)\\
\hline
|G_{23} : A_2|_q & \Phi_2^2\Phi_5\Phi_6\Phi_{10}(x_1)\\
\hline
\hline
|G_{24}|_q&\Phi_2^3\Phi_3\Phi_4\Phi_6\Phi_7\Phi_{14}(x_1)\\
\hline
|G_{24} : A_2|_q
  & \Phi_2^2\Phi_4\Phi_6\Phi_7\Phi_{14}(x_1)\\
\hline
|G_{24} : B_2|_q
  & \Phi_2\Phi_3\Phi_6\Phi_7\Phi_{14}(x_1)\\
\hline
\hline
|G_{25}|_q&\Phi_2{\Phi''_3}^2{\Phi''_6}{\Phi''''_{12}}(x_1)\Phi_2{\Phi'_3}^2{\Phi'_6}{\Phi'''_{12}}(x_2)\Phi_2\Phi_3(x_1x_2){\Phi'_3}(x_1x_2^2){\Phi''_3}(x_1^2x_2)\\
\hline
|G_{25} : G_4|_q
  & {\Phi''_3}{\Phi''''_{12}}(x_1){\Phi'_3}{\Phi'''_{12}}(x_2)\Phi_3(x_1x_2){\Phi'_3}(x_1x_2^2){\Phi''_3}(x_1^2x_2)\\
\hline
|G_{25} : Z_3^2|_q
  & \Phi_2{\Phi''_6}{\Phi''''_{12}}(x_1)\Phi_2{\Phi'_6}{\Phi'''_{12}}(x_2)\Phi_2\Phi_3(x_1x_2){\Phi'_3}(x_1x_2^2){\Phi''_3}(x_1^2x_2)\\
\hline
\hline
|G_{26}|_q&\Phi_2(x_1)\Phi_2{\Phi''_3}{\Phi''_6}(y_1)\Phi_2{\Phi'_3}{\Phi'_6}(y_2){\Phi''_3}(x_1y_1){\Phi'_3}(x_1y_2)\Phi_2(y_1y_2)\Phi_3(x_1y_1y_2){\Phi'_3}(x_1y_1y_2^2){\Phi''_6}(x_1y_1^2){\Phi'_6}(x_1y_2^2){\Phi''_3}(x_1y_1^2y_2)\\
\hline
|G_{26} : G_{3,1,2}|_q
  & \Phi_2{\Phi''_6}(y_1)\Phi_2{\Phi'_6}(y_2)\Phi_2(y_1y_2)\Phi_3(x_1y_1y_2){\Phi'_3}(x_1y_1y_2^2){\Phi''_6}(x_1y_1^2){\Phi'_6}(x_1y_2^2){\Phi''_3}(x_1y_1^2y_2)\\
\hline
|G_{26} : A_1 Z_3|_q
  & \Phi_2{\Phi''_6}(y_1)\Phi_2{\Phi'_6}(y_2){\Phi''_3}(x_1y_1){\Phi'_3}(x_1y_2)\Phi_2(y_1y_2)\Phi_3(x_1y_1y_2){\Phi'_3}(x_1y_1y_2^2){\Phi''_6}(x_1y_1^2){\Phi'_6}(x_1y_2^2){\Phi''_3}(x_1y_1^2y_2)\\
\hline
|G_{26} : G_4|_q
  & \Phi_2(x_1){\Phi''_3}(x_1y_1){\Phi'_3}(x_1y_2)\Phi_3(x_1y_1y_2){\Phi'_3}(x_1y_1y_2^2){\Phi''_6}(x_1y_1^2){\Phi'_6}(x_1y_2^2){\Phi''_3}(x_1y_1^2y_2)\\
\hline
\hline
|G_{27}|_q&\Phi_2^3\Phi_3^3\Phi_4\Phi_5\Phi_6^3\Phi_{10}\Phi_{12}\Phi_{15}\Phi_{30}(x_1)\\
\hline
|G_{27} : A'_2|_q = |G_{27} : A''_2|_q & \Phi_2^2\Phi_3^2\Phi_4\Phi_5\Phi_6^3\Phi_{10}\Phi_{12}\Phi_{15}\Phi_{30}(x_1)\\
\hline
|G_{27} : B_2|_q & \Phi_2\Phi_3^3\Phi_5\Phi_6^3\Phi_{10}\Phi_{12}\Phi_{15}\Phi_{30}(x_1)\\
\hline
|G_{27} : H_2|_q & \Phi_2^2\Phi_3^3\Phi_4\Phi_6^3\Phi_{10}\Phi_{12}\Phi_{15}\Phi_{30}(x_1)\\ 
\hline
\end{array}
\]

\[
\begin{array}{|c|l|}
\hline
|G_{28}|_q = |F_4|_q&\Phi_2\Phi_3(x_1)\Phi_2\Phi_3(y_1)\Phi_2^2\Phi_4\Phi_6(x_1y_1)\Phi_2(x_1y_1^2)\Phi_2(x_1^2y_1)\\
\hline
|F_4 : B_3|_q & \Phi_3(y_1)\Phi_2\Phi_4\Phi_6(x_1y_1)\Phi_2(x_1y_1^2)\\
\hline
|F_4 : A_2 \tilde A_1|_q & \Phi_3(y_1)\Phi_2^2\Phi_4\Phi_6(x_1y_1)\Phi_2(x_1y_1^2)\Phi_2(x_1^2y_1)\\
\hline
|F_4 : A_1 \tilde A_2|_q & \Phi_3(x_1)\Phi_2^2\Phi_4\Phi_6(x_1y_1)\Phi_2(x_1y_1^2)\Phi_2(x_1^2y_1)\\
\hline
|F_4 : C_3|_q & \Phi_3(x_1)\Phi_2\Phi_4\Phi_6(x_1y_1)\Phi_2(x_1^2y_1)\\
\hline
\hline
|G_{29}|_q&\Phi_2^4\Phi_3\Phi_4^4\Phi_5\Phi_6\Phi_8\Phi_{10}\Phi_{12}\Phi_{20}(x_1)\\
\hline
|G_{29} : B_3|_q & \Phi_2\Phi_4^3\Phi_5\Phi_8\Phi_{10}\Phi_{12}\Phi_{20}(x_1)\\
\hline
|G_{29} : A_3|_q & \Phi_2^2\Phi_4^3\Phi_5\Phi_6\Phi_8\Phi_{10}\Phi_{12}\Phi_{20}(x_1)\\
\hline
|G_{29} : A_1 A_2|_q & \Phi_2^2\Phi_4^4\Phi_5\Phi_6\Phi_8\Phi_{10}\Phi_{12}\Phi_{20}(x_1)\\
\hline
|G_{29} : A_3|_q & \Phi_2^2\Phi_4^3\Phi_5\Phi_6\Phi_8\Phi_{10}\Phi_{12}\Phi_{20}(x_1)\\
\hline
|G_{29} : G_{4,4,3}|_q & \Phi_2^2\Phi_4^2\Phi_5\Phi_6\Phi_{10}\Phi_{12}\Phi_{20}(x_1)\\
\hline
\hline
|G_{30}|_q = |H_4|_q&\Phi_2^4\Phi_3^2\Phi_4^2\Phi_5^2\Phi_6^2\Phi_{10}^2\Phi_{12}\Phi_{15}\Phi_{20}\Phi_{30}(x_1)\\
\hline
|H_4 : H_3|_q & \Phi_2\Phi_3\Phi_4^2\Phi_5\Phi_6\Phi_{10}\Phi_{12}\Phi_{15}\Phi_{20}\Phi_{30}(x_1) \\
\hline
|H_4 : H_2 A_1|_q & \Phi_2^2\Phi_3^2\Phi_4^2\Phi_5\Phi_6^2\Phi_{10}^2\Phi_{12}\Phi_{15}\Phi_{20}\Phi_{30}(x_1) \\
\hline
|H_4 : A_2 A_1|_q & \Phi_2^2\Phi_3\Phi_4^2\Phi_5^2\Phi_6^2\Phi_{10}^2\Phi_{12}\Phi_{15}\Phi_{20}\Phi_{30}(x_1) \\
\hline
|H_4 : A_3|_q & \Phi_2^2\Phi_3\Phi_4\Phi_5^2\Phi_6^2\Phi_{10}^2\Phi_{12}\Phi_{15}\Phi_{20}\Phi_{30}(x_1) \\
\hline
\hline
|G_{31}|_q&\Phi_2^8\Phi_3^2\Phi_4^2\Phi_5\Phi_6^4\Phi_{10}^2\Phi_{12}\Phi_{14}\Phi_{18}\Phi_{30}(x_1)\\
\hline
|G_{31} : A_2 A_1|_q & \Phi_2^6\Phi_3\Phi_4^2\Phi_5\Phi_6^4\Phi_{10}^2\Phi_{12}\Phi_{14}\Phi_{18}\Phi_{30}(x_1)
\\
\hline
|G_{31} : A_3|_q & \Phi_2^6\Phi_3\Phi_4\Phi_5\Phi_6^4\Phi_{10}^2\Phi_{12}\Phi_{14}\Phi_{18}\Phi_{30}(x_1)
\\
\hline
|G_{31} : G_{4,2,3}|_q & \Phi_2^3\Phi_3\Phi_4\Phi_5\Phi_6^3\Phi_{10}\Phi_{12}\Phi_{14}\Phi_{18}\Phi_{30}(x_1)
\\
\hline
\hline
|G_{32}|_q&\Phi_2{\Phi''_3}^2{\Phi''_6}{\Phi''''_{12}}{\Phi''''_{30}}(x_1)\Phi_2{\Phi'_3}^2{\Phi'_6}{\Phi'''_{12}}{\Phi'''_{30}}(x_2)\Phi_2^2\Phi_3\Phi_4\Phi_5\Phi_6(x_1x_2)\Phi_2{\Phi'_3}{\Phi'_6}(x_1x_2^2){\Phi'_6}(x_1x_2^3)\Phi_2{\Phi''_3}{\Phi''_6}(x_1^2x_2){\Phi'_3}(x_1^2x_2^3){\Phi''_6}(x_1^3x_2){\Phi''_3}(x_1^3x_2^2)\\
\hline
|G_{32} : G_{25}|_q
& {\Phi''''_{30}}(x_1){\Phi'''_{30}}(x_2)\Phi_2\Phi_4\Phi_5\Phi_6(x_1x_2)\Phi_2{\Phi'_6}(x_1x_2^2){\Phi'_6}(x_1x_2^3)\Phi_2{\Phi''_6}(x_1^2x_2)
{\Phi'_3}(x_1^2x_2^3){\Phi''_6}(x_1^3x_2){\Phi''_3}(x_1^3x_2^2)
\\
\hline
|G_{32} : Z_3G_4|_q
& {\Phi''''_{12}}{\Phi''''_{30}}(x_1){\Phi'''_{12}}{\Phi'''_{30}}(x_2)\Phi_2\Phi_3\Phi_4\Phi_5\Phi_6(x_1x_2)\Phi_2{\Phi'_3}{\Phi'_6}(x_1x_2^2){\Phi'_6}(x_1x_2^3) \Phi_2{\Phi''_3}{\Phi''_6}(x_1^2x_2){\Phi'_3}(x_1^2x_2^3){\Phi''_6}(x_1^3x_2){\Phi''_3}(x_1^3x_2^2)
\\
\hline
\end{array}
\]

\[
\begin{array}{|c|l|}
\hline
|G_{33}|_q&\Phi_2^5\Phi_3^3\Phi_4^2\Phi_5\Phi_6^3\Phi_9\Phi_{10}\Phi_{12}\Phi_{18}(x_1)\\
\hline
|G_{33} : A_3 A_1|_q & \Phi_2^2\Phi_3^2\Phi_4\Phi_5\Phi_6^3\Phi_9\Phi_{10}\Phi_{12}\Phi_{18}(x_1)\\
\hline
|G_{33} : D_4|_q & \Phi_2\Phi_3^2\Phi_5\Phi_6^2\Phi_9\Phi_{10}\Phi_{12}\Phi_{18}(x_1)\\
\hline
|G_{33} : G_3,3,4|_q & \Phi_2^3\Phi_4\Phi_5\Phi_6^2\Phi_{10}\Phi_{12}\Phi_{18}(x_1)\\
\hline
|G_{33} : A_4|_q & \Phi_2^3\Phi_3^2\Phi_4\Phi_6^3\Phi_9\Phi_{10}\Phi_{12}\Phi_{18}(x_1)\\
\hline
\hline
|G_{34}|_q&\Phi_2^6\Phi_3^6\Phi_4^2\Phi_5\Phi_6^6\Phi_7\Phi_8\Phi_9\Phi_{10}\Phi_{12}^2\Phi_{14}\Phi_{15}\Phi_{18}\Phi_{21}\Phi_{24}\Phi_{30}\Phi_{42}(x_1)\\
\hline
|G_{34} : G_{33}|_q & \Phi_2\Phi_3^3\Phi_6^3\Phi_7\Phi_8\Phi_{12}\Phi_{14}\Phi_{15}\Phi_{21}\Phi_{24}\Phi_{30}\Phi_{42}(x_1)\\
\hline
|G_{34} : A_1 A_4|_q
& \Phi_2^3\Phi_3^5\Phi_4\Phi_6^6\Phi_7\Phi_8\Phi_9\Phi_{10}\Phi_{12}^2\Phi_{14}\Phi_{15}\Phi_{18}\Phi_{21}\Phi_{24}\Phi_{30}\Phi_{42}(x_1)\\
\hline
|G_{34} : D_5|_q
& \Phi_2^2\Phi_3^5\Phi_6^5\Phi_7\Phi_9\Phi_{10}\Phi_{12}^2\Phi_{14}\Phi_{15}\Phi_{18}\Phi_{21}\Phi_{24}\Phi_{30}\Phi_{42}(x_1)
\\
\hline
|G_{34} : G_{3,3,4} A_1|_q
& \Phi_2^3\Phi_3^3\Phi_4\Phi_5\Phi_6^5\Phi_7\Phi_8\Phi_{10}\Phi_{12}^2\Phi_{14}\Phi_{15}\Phi_{18}\Phi_{21}\Phi_{24}\Phi_{30}\Phi_{42}(x_1)\\
\hline
|G_{34} : A_3 A_2|_q
& \Phi_2^3\Phi_3^4\Phi_4\Phi_5\Phi_6^6\Phi_7\Phi_8\Phi_9\Phi_{10}\Phi_{12}^2\Phi_{14}\Phi_{15}\Phi_{18}\Phi_{21}\Phi_{24}\Phi_{30}\Phi_{42}(x_1)\\
\hline
|G_{34} : A_5',  A_5''|_q
& \Phi_2^3\Phi_3^4\Phi_4\Phi_6^5\Phi_7\Phi_8\Phi_9\Phi_{10}\Phi_{12}^2\Phi_{14}\Phi_{15}\Phi_{18}\Phi_{21}\Phi_{24}\Phi_{30}\Phi_{42}(x_1)
\\
\hline
|G_{34} : G_{3,3,5}|_q
& \Phi_2^4\Phi_3^2\Phi_4\Phi_6^4\Phi_7\Phi_8\Phi_{10}\Phi_{12}\Phi_{14}\Phi_{15}\Phi_{18}\Phi_{21}\Phi_{24}\Phi_{30}\Phi_{42}(x_1)\\
\hline
\hline
|G_{35}|_q = |E_6|_q & \Phi_2^4\Phi_3^3\Phi_4^2\Phi_5\Phi_6^2\Phi_8\Phi_9\Phi_{12}(x_1)\\
\hline
|E_6 : D_5|_q &\Phi_3^2\Phi_6\Phi_9\Phi_{12}(x_1)\\
\hline
|E_6 : A_4 A_1|_q &\Phi_2\Phi_3^2\Phi_4\Phi_6^2\Phi_8\Phi_9\Phi_{12}(x_1)
\\
\hline
|E_6 : A_2^2 A_1|_q &\Phi_2\Phi_3\Phi_4^2\Phi_5\Phi_6^2\Phi_8\Phi_9\Phi_{12}(x_1)
\\
\hline
|E_6 : A_5|_q &\Phi_2\Phi_3\Phi_4\Phi_6\Phi_8\Phi_9\Phi_{12}(x_1)
\\
\hline
\hline
|G_{36}|_q = |E_7|_q &\Phi_2^7\Phi_3^3\Phi_4^2\Phi_5\Phi_6^3\Phi_7\Phi_8\Phi_9\Phi_{10}\Phi_{12}\Phi_{14}\Phi_{18}(x_1)\\
\hline
|E_7 : E_6|_q & \Phi_2^3\Phi_6\Phi_7\Phi_{10}\Phi_{14}\Phi_{18}(x_1)\\
\hline
|E_7 : D_5 A_1|_q & \Phi_2^2\Phi_3^2\Phi_6^2\Phi_7\Phi_9\Phi_{10}\Phi_{12}\Phi_{14}\Phi_{18}(x_1)\\
\hline
|E_7 : A_4 A_2|_q & \Phi_2^4\Phi_3\Phi_4\Phi_6^3\Phi_7\Phi_8\Phi_9\Phi_{10}\Phi_{12}\Phi_{14}\Phi_{18}(x_1)\\
\hline
|E_7 : A_3 A_2 A_1|_q & \Phi_2^3\Phi_3\Phi_4\Phi_5\Phi_6^3\Phi_7\Phi_8\Phi_9\Phi_{10}\Phi_{12}\Phi_{14}\Phi_{18}(x_1)\\
\hline
|E_7 : A_5 A_1|_q&\Phi_2^3\Phi_3\Phi_4\Phi_6^2\Phi_7\Phi_8\Phi_9\Phi_{10}\Phi_{12}\Phi_{14}\Phi_{18}(x_1)
\\
\hline
|E_7 : A_6|_q & \Phi_2^4\Phi_3\Phi_4\Phi_6^2\Phi_8\Phi_9\Phi_{10}\Phi_{12}\Phi_{14}\Phi_{18}(x_1)\\
\hline
|E_7 : D_6|_q &\Phi_2\Phi_3\Phi_6\Phi_7\Phi_9\Phi_{12}\Phi_{14}\Phi_{18}(x_1)
\\
\hline
\hline|G_{37}|_q = |E_8|_q &\Phi_2^8\Phi_3^4\Phi_4^4\Phi_5^2\Phi_6^4\Phi_7\Phi_8^2\Phi_9\Phi_{10}^2\Phi_{12}^2\Phi_{14}\Phi_{15}\Phi_{18}\Phi_{20}\Phi_{24}\Phi_{30}(x_1)\\
\hline
|E_8 : E_7|_q & \Phi_2\Phi_3\Phi_4^2\Phi_5\Phi_6\Phi_8\Phi_{10}\Phi_{12}\Phi_{15}\Phi_{20}\Phi_{24}\Phi_{30}(x_1)\\
\hline
|E_8 : E_6 A_1|_q&\Phi_2^3\Phi_3\Phi_4^2\Phi_5\Phi_6^2\Phi_7\Phi_8
\Phi_{10}^2\Phi_{12}\Phi_{14}\Phi_{15}\Phi_{18}\Phi_{20}\Phi_{24}\Phi_{30}(x_1)
\\
\hline
|E_8 : D_5 A_2|_q &
\Phi_2^3\Phi_3^2\Phi_4^2\Phi_5\Phi_6^3\Phi_7\Phi_8\Phi_9\Phi_{10}^2\Phi_{12}^2\Phi_{14}\Phi_{15}\Phi_{18}\Phi_{20}\Phi_{24}\Phi_{30}(x_1)\\
\hline
|E_8 : A_4 A_3|_q & \Phi_2^4\Phi_3^2\Phi_4^2\Phi_5\Phi_6^4\Phi_7\Phi_8^2\Phi_9\Phi_{10}^2
\Phi_{12}^2\Phi_{14}\Phi_{15}\Phi_{18}\Phi_{20}\Phi_{24}\Phi_{30}(x_1)\\
\hline
|E_8 : A_4 A_2 A_1|_q & \Phi_2^4\Phi_3^2\Phi_4^3\Phi_5\Phi_6^4\Phi_7\Phi_8^2\Phi_9\Phi_{10}^2
\Phi_{12}^2\Phi_{14}\Phi_{15}\Phi_{18}\Phi_{20}\Phi_{24}\Phi_{30}(x_1)\\
\hline
|E_8 : A_6 A_1|_q & \Phi_2^4\Phi_3^2\Phi_4^3\Phi_5\Phi_6^3\Phi_8^2\Phi_9\Phi_{10}^2
\Phi_{12}^2\Phi_{14}\Phi_{15}\Phi_{18}\Phi_{20}\Phi_{24}\Phi_{30}(x_1)\\
\hline
|E_8 : A_7|_q & \Phi_2^4\Phi_3^2\Phi_4^2\Phi_5\Phi_6^3\Phi_8\Phi_9\Phi_{10}^2
\Phi_{12}^2\Phi_{14}\Phi_{15}\Phi_{18}\Phi_{20}\Phi_{24}\Phi_{30}(x_1)\\
\hline
|E_8 : D_7|_q & \Phi_2^2\Phi_3^2\Phi_4\Phi_5\Phi_6^2\Phi_8\Phi_9\Phi_{10}\Phi_{12}\Phi_{14}\Phi_{15}
\Phi_{18}\Phi_{20}\Phi_{24}\Phi_{30}(x_1)\\
\hline

\end{array}
\]

\end{landscape}

\def\cprime{$'$} \def\cprime{$'$}

\end{document}